\documentclass[reqno]{amsart}
\usepackage{amsmath, amssymb, amsthm, cite}
\usepackage{comment}
\usepackage{xcolor}
\newtheorem{theorem}{Theorem}[section]
\newtheorem{lemma}[theorem]{Lemma}
\newtheorem{lem}[theorem]{Lemma}

\newtheorem{prop}[theorem]{Proposition}
\newtheorem{corollary}[theorem]{Corollary}

\newtheorem{remark}[theorem]{Remark}

\newtheorem{ques}[theorem]{Question}

\newcommand{\cM}{{\mathcal M}}
\newcommand{\cN}{{\mathcal N}}
\newcommand{\cA}{{\mathcal A}}
\newcommand{\cB}{{\mathcal B}}

\newcommand{\bR}{{\mathbb R}}

\newcommand{\abs}[1]{\left| #1 \right|}
\newcommand{\tnorm}[1]{\left\vert\kern-0.25ex\left\vert\kern-0.25ex\left\vert #1 
	\right\vert\kern-0.25ex\right\vert\kern-0.25ex\right\vert}

\newcommand{\norm}[1]{\left\lVert#1\right\rVert}

\usepackage{hyperref}					
\hypersetup{colorlinks,
	linkcolor=blue,%
	citecolor=blue}
\begin{document}

\author[J. Huang]{Jinghao Huang}
\address{Institute for Advances Study in Mathematics, Harbin Institute of Technologies,
Harbin, 150001, China}
\email{jinghao.huang@hit.edu.cn}

\author[K. Kudaybergenov]{Karimbergen Kudaybergenov}
\address{Institute for Advances Study in Mathematics, Harbin Institute of Technologies,
Harbin, 150001, China and
Suzhou Research Institute of Harbin Institute of Technology, Suzhou, 215104, China}
\email{kudaybergenovkk@gmail.com}

\author[B. Yan]{Bing Yan$^{\ast}$}\thanks{*The corresponding author.}
\address{Institute for Advances Study in Mathematics, Harbin Institute of Technologies,
Harbin, 150001, China}
\email{bingyan202510@163.com}

\title[Isometries on algebras of locally measurable operators]{Structure of linear isometries on algebras of locally measurable operators}

\subjclass[2020]{46H20; 46H05; 46L51; 47B49}

	\keywords{algebra of locally measurable operators; isometry; dimension function}

\begin{abstract}
Let $LS(\cM)$ be the algebra  of locally measurable
operators affiliated with a von Neumann algebra $\cM$, equipped with an
$F$-norm defined via a dimension function and a probability measure. 
We prove that every bijective linear isometry between $LS(\cM)$ admits a canonical representation of the form $\Phi(x)=wJ(x)$,
where $w$ is a unitary element and $J$ is a Jordan $^*$-isomorphism, which   extends classical results such as the Banach--Stone theorem and
Kadison's theorem.
Under several structural
assumptions on the underlying von Neumann algebras (including all type
$\mathrm{II}_\infty$ and  type $\mathrm{III}$ algebras, and all  factors, and  algebras with atomless
centers), we prove the one-to-one correspondence between 
 the $F$-norm and   the pair $(\mu, D)$  of a probability measure and a dimension function, 
which fails for algebras 
 with
atomic centers.
\end{abstract}

\maketitle

\section{Introduction}

An \textit{isometry} between operator algebras equipped with metrics is a (not necessarily surjective) linear map $\Phi:\mathcal{A}\to\mathcal{B}$ satisfying\cite{Ro}
\begin{equation*}
d_\cB(\Phi(x),\Phi(y) )=d_\cA(x,y )\quad \text{for all } x,y\in\mathcal{A}. 
\end{equation*}
A seminal result  is due to Kadison~\cite{K51}, who established a noncommutative version of the classical Banach--Stone Theorem\cite{K51}:
\begin{quote}
Every   linear surjective isometry  $\Phi:\mathcal{A}\to\mathcal{B}$ between unital $C^*$-algebras admits the following form
\begin{align}\label{descriptionofKadison}
  \Phi(x)=u\,J(x),\; x\in\mathcal{A},
\end{align}
where $u\in\mathcal{B}$ is a unitary   and $J:\mathcal{A}\to\mathcal{B}$ is a Jordan $^*$-isomorphism.
\end{quote}
Russo and Dye \cite{RD66} generalized Kadison’s theorem, proving that any linear map between unital $C^*$-algebras which sends unitaries to unitaries can be expressed as the form \eqref{descriptionofKadison}. 
Throughout this paper, we only consider linear isometries, and therefore, the term ``linear'' may be omitted. 

Recently, isometries on  the $L_0$-spaces and the Haagerup--Schultz algebras have been thoroughly studied in \cite{BHS24} (see also \cite{HKS} for $L_0$-isometries with respect to another metric) and \cite{HKSY}, respectively. 
For   descriptions of isometries on  commutative or  noncommutative $L_p$-spaces and, more generally,  symmetric spaces, we refer the reader to  \cite{Y81,HSZ20, HS24} and references therein.


Let $\cM$ be a von Neumann algebra with the center $Z(\cM)\cong L_\infty(\Omega, \Sigma, \mu)$, where $\mu$ is a strictly positive probability measure.
Denote by $L^+(\Omega, \Sigma, \mu)$ the set of all measurable functions defined on $(\Omega, \Sigma, \mu)$ and taking values in the extended half-line $[0,\infty]$ (functions that are equal almost everywhere are identified).
Let $D:P(\cM) \to L^+(\Omega, \Sigma, \mu)$ be a  dimension function.
For a function $f\in L^+(\Omega, \Sigma, \mu)$ and $\lambda\in\mathbb{R}$, we use the notation $[f>\lambda]$ to denote the superlevel set $\{\omega\in \Omega: f(\omega)>\lambda\}$.

Define the $F$-norm on $LS(\cM)$, the algebra of all locally measurable operators with respect to $\cM$, as \cite[page 262]{Yeadon 1973}  (see also in \cite[Section 4.3]{BCLSZ}) 
\begin{align}\label{LSM-norm}
 \|x\|_{\mu,D}=\inf\limits_{\lambda>0}\max\{\lambda,\mu([D(e_{(\lambda,\infty)}(\abs{x}))>\lambda])\}.
\end{align}
where $e_{(\lambda,\infty)}(|x|)$ is the spectral projection of $|x|$ corresponding to the interval $(\lambda, \infty)$.
In general, the $F$-norm on $LS(\cM)$ defined by \eqref{LSM-norm} depends on $\mu$ and $D$. To emphasize this dependence, we denote the $F$-norm by $\left\|\cdot\right\|_{\mu,D}$. 
Notably, the $F$-norm was introduced by Yeadon \cite{Yeadon 1973} to study the local measure topology (see \cite[Theorem 3.3]{Yeadon 1973} and \cite[Chapter 4.3]{BCLSZ}).

Note that the local measure topology on $LS(\cM)$ is metrizable if and only if the center $Z(\cM)$ of $\cM$ is countable decomposable \cite[Proposition 6.2]{MC}. 
Throughout this paper, unless otherwise stated, we only consider von Neumann algebras with countable decomposable centers.  

Let $(\Omega_i, \Sigma_i, \mu_i)$, $i=1,2$ be two probability measure spaces such that 
$Z(\cM)\cong L_\infty(\Omega_i, \Sigma_i, \mu_i)$, $i=1,2$. 
Let $D_i: P(\cM)\to L^+(\Omega_i, \Sigma_i, \mu_i)$, $i=1,2$ be  two dimension functions on $P(\cM)$.

We say that the pairs $(\mu_1, D_1)$ and $(\mu_2,D_2)$ are \emph{equivalent} if
\[
\left\|\cdot\right\|_{\mu_1,D_1}=\left\|\cdot\right\|_{\mu_2,D_2}.
\]

Since $LS(\cM)$ is  
the largest possible bimodule  affiliated with $\cM$ \cite{Y74}, it is of considerable interest to investigate the structure of the surjective isometries defined on them. 
Our main objective is to characterize  bijective linear isometries between such algebras and to understand 
the relationship between 
 the underlying von Neumann algebras and  these mappings. In particular, we show that every  bijective linear isometry of
$LS(\cM)$ with respect to the
$F$-norm $\left\|\cdot\right\|_{\mu,D}$ has a canonical form determined by a unitary element and a Jordan
$^*$-isomorphism between the algebras of locally measurable operators. 
Moreover, we study the relationships between pairs consisting of dimension functions and probability measures and the $F$-norm induced by these two quantities. Theorems \ref{local-isometry},  \ref{con-equiv}, and  \ref{case of type I} below provide a complete description of such isometries and clarify when the associated pairs consisting of a dimension function and a probability measure coincide.

\begin{theorem}\label{local-isometry} (see Section \ref{s4} for a complete proof)
Let $\mathcal{M}$ and $\mathcal{N}$ be von Neumann algebras, and let $(\mu_\cM,D_\cM)$ and $(\mu_\cN,D_\cN)$ be the corresponding pairs consisting of a probability measure and a dimension function. Suppose that $\Phi:LS(\cM) \to LS(\cN)$ is a linear bijection.
Then the following conditions are equivalent:
\begin{enumerate}
    \item $\Phi$ is an isometry from $\left(LS(\cM), \left\|\cdot\right\|_{\mu_\cM, D_\cM}\right)$ onto $\left(LS(\cN), \left\|\cdot\right\|_{\mu_\cN, D_\cN}\right)$;
    \item $\Phi$ admits a representation of the form
    \begin{align}\label{gen-form}
        \Phi(x) = w J(x), \quad x \in LS(\cM),
    \end{align}
    where $w$ is a unitary element in $\cN$ and $J:LS(\cM) \to LS(\cN)$ is a Jordan $^*$-isomorphism such that the pairs $\left(\mu_\cM, D_\cM\right)$ and $\left(\mu_\mathcal{N}, D_\mathcal{N}\circ J|_{P(\cM)}\right)$ are equivalent.
\end{enumerate}
\end{theorem}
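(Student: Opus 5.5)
The implication (2)$\Rightarrow$(1) is a direct computation. If $\Phi(x)=wJ(x)$ with $w$ unitary in $\cN$ and $J$ a Jordan $^*$-isomorphism, then
\[
|\Phi(x)|^2=J(x)^*w^*wJ(x)=J(x)^*J(x).
\]
A Jordan $^*$-isomorphism need not satisfy $J(x^*x)=J(x)^*J(x)$. However, it decomposes along a central projection $z$ as a homomorphism on one part and an antihomomorphism on the other. On each part, $|J(x)|$ has the same spectral projections as $J(|x|)$ or $J(|x^*|)$. Because $J$ preserves the functional calculus of self-adjoint elements, this gives
\[
e_{(\lambda,\infty)}(|J(x)|)=J\bigl(e_{(\lambda,\infty)}(|x|)\bigr)z+J\bigl(e_{(\lambda,\infty)}(|x^*|)\bigr)(1-z).
\]
The projections $e_{(\lambda,\infty)}(|x|)$ and $e_{(\lambda,\infty)}(|x^*|)$ are equivalent, so they have the same dimension. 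Hence
\[
D_\cN\bigl(e_{(\lambda,\infty)}(|\Phi(x)|)\bigr)=(D_\cN\circ J)\bigl(e_{(\lambda,\infty)}(|x|)\bigr),
\]
provided $D_\cN\circ J$ is additive on central pieces and invariant under equivalence. Plugging this into \eqref{LSM-norm} gives $\|\Phi(x)\|_{\mu_\cN,D_\cN}=\|x\|_{\mu_\cN,D_\cN\circ J}$. By the assumed equivalence of the pairs, the right-hand side equals $\|x\|_{\mu_\cM,D_\cM}$.

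For (1)$\Rightarrow$(2) I would proceed in three steps.

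First, I would extract an order/lattice structure that the $F$-norm detects. The natural candidate is \emph{disjointness}: $x$ and $y$ have orthogonal left and right supports. I would characterize it metrically, for instance by
\[
\|x+ty\|=\|x-ty\| \quad\text{together with}\quad \|x+ty\|\ge\max(\|x\|,\|ty\|)\ \text{for all scalars } t,
\]
or by using the behaviour of $\|\lambda x\|$ as $\lambda\to\infty$, which equals $\mu(\text{central support where } x\neq0)$ in the limit. The key observation is that $\lim_{\lambda\to\infty}\|\lambda x\|_{\mu,D}=\mu([D(s(x))>0])$, the measure of the central support. Using also $\|\lambda x\|$ for small $\lambda$ and the level structure, one can recover projections up to partial isometries: elements $v$ that are partial isometries are characterized by $\|tv\|$ being a step function in $t$ of a specific form. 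I would thus show that $\Phi$ maps partial isometries (or at least unitaries, via maximality) to partial isometries and preserves orthogonality of partial isometries.

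Second, from these facts I would show that $\Phi$ restricted to $\cM$ maps unitaries of $\cM$ onto unitaries of $\cN$. This is the main obstacle: $F$-norm isometries are not norm-type, and $\Phi$ need not a priori map $\cM$ into $\cN$. I would first try to show that $\Phi$ preserves the bounded part $\cM$, characterized metrically as those $x$ for which $\|tx\|\to0$ uniformly in a suitable scaled sense. I would then show it is isometric for the operator norm there, recovered as $\inf\{\lambda:\|x/\lambda\|\text{-level set empty}\}$. At that point Kadison's theorem \eqref{descriptionofKadison} gives $\Phi|_\cM=wJ$, where $w=\Phi(1)$ is unitary and $J$ is a Jordan $^*$-isomorphism of $\cM$ onto $\cN$.

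Third, I would extend $J$ to $LS(\cM)$, which can be done uniquely and continuously since Jordan $^*$-isomorphisms extend. Since $\cM$ is dense in $LS(\cM)$ in the local measure topology, which is exactly the $F$-norm topology, and both $\Phi$ and $x\mapsto wJ(x)$ are continuous, they agree everywhere. Finally, the equivalence of $(\mu_\cM,D_\cM)$ and $(\mu_\cN,D_\cN\circ J)$ follows by reading the computation in the first paragraph backwards: $\|x\|_{\mu_\cM,D_\cM}=\|\Phi(x)\|=\|x\|_{\mu_\cN,D_\cN\circ J}$.
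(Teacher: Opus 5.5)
\textbf{There is a genuine gap, at the step you yourself call the main obstacle in (1)$\Rightarrow$(2).} Your direction (2)$\Rightarrow$(1), the density/extension step, and the final reading-off of the equivalence of the pairs are fine and match the paper (the computation in Lemma~\ref{two-}).

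What Kadison's theorem needs is that $\Phi(\cM)=\cN$ and that $\Phi|_{\cM}$ is an operator-norm isometry. You never show either.
\begin{itemize}
\item ``$\|tx\|\to0$ uniformly in a suitable scaled sense'' is not a criterion as stated: every $x\in LS(\cM)$ has $\|tx\|_{\mu,D}\to0$.
\item ``$\inf\{\lambda:\ \text{level set empty}\}$'' is defined through spectral projections of $|x|$, not through values of the $F$-norm. Nothing in your argument forces $\Phi$ to preserve it.
\item Step~1 does not fill the gap. It is unnecessary, the disjointness and partial-isometry characterizations are never established, and the ``key observation'' is false. For $\cM=\mathbb{C}$ with $D(\mathbf{1})=d<1$ one gets $\|\lambda\mathbf{1}\|_{\mu,D}=\min\{\lambda,d\}\to d$, not $\mu(\Omega)=1$. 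In general the limit is governed by $D(s(x))$, not by the central support.
\end{itemize}

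The missing idea is a small-scale asymptotic (Lemmas~\ref{boundedcase} and~\ref{unboundedcase}): for $x\in\cM$, $\lim_{\lambda\downarrow0}\|\lambda x\|_{\mu,D}/\lambda=\|x\|_\cM$, while for $x\in LS(\cM)\setminus\cM$ the limit is $+\infty$.
\begin{itemize}
\item The upper bound follows from $\|y\|_{\mu,D}\le\|y\|_\cM$.
\item For the lower bound, let $c=\|x\|_\cM$ and $p=e_{(c-r,\infty)}(|x|)\neq0$, and set $\gamma=\mu([D(p)>0])>0$. Choose $\delta$ with $\mu([D(p)>\delta])>\gamma/2$. If $\lambda(c-r)<\min\{\delta,\gamma/2\}$, then any threshold $\lambda s<\lambda(c-r)$ in the definition of $\|\lambda x\|_{\mu,D}$ has $e_{(s,\infty)}(|x|)\ge p$. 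Its measure term is therefore at least $\mu([D(p)>\delta])>\gamma/2>\lambda(c-r)$, so $\|\lambda x\|_{\mu,D}\ge\lambda(c-r)$.
\item The unbounded case follows by monotonicity applied to the bounded pieces $|x|e_{(n,k_n)}(|x|)$.
\end{itemize}
By linearity, $\|\lambda\Phi(x)\|=\|\lambda x\|$ for all $\lambda>0$, so $\Phi$ preserves this quantity. This gives $\Phi(\cM)\subseteq\cN$ with $\|\Phi(x)\|_\cN=\|x\|_\cM$. Running the same argument for $\Phi^{-1}$ gives $\Phi(\cM)=\cN$. Only then does Kadison's theorem apply, after which the rest of your plan goes through.
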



In Section~\ref{s4}, we show that the  study of equivalence of pairs of probability measures and dimension functions reduced to the case when both measures are defined on same domain. Consequently, we come to the following question.

\begin{ques} Let $(\Omega, \Sigma, \mu)$ and $(\Omega, \Sigma, \mu')$ be probability measure spaces and let $D,D'$ be dimension functions on the lattice of projections $P(\cM)$ of the von Neumann algebra $\cM$ with values in $L^+(\Omega, \Sigma, \mu)$ and $L^+(\Omega, \Sigma, \mu')$, respectively. 
If the pairs $(\mu,D)$ and $(\mu',D')$ are equivalent, must we have $\mu=\mu'$ and $D=D'$?
\end{ques}

In the following result, we establish conditions on the von Neumann algebra under which equivalent pairs of probability measures and dimension functions coincide.

\begin{theorem}\label{con-equiv}
Let $\mathcal{M}$ be a von Neumann algebra  and let $(\mu,D)$ and $(\mu',D')$ be pairs consisting of a probability measure and a dimension function. Assume that the algebra $\mathcal{M}$ satisfies  one of the following conditions:
\begin{enumerate}
    \item[(a)] it is  of type II$_\infty$ or type III;
    \item[(b)] it is  of type II$_1$ with atomless center or it is a II$_1$ factor.
\end{enumerate}
If the pairs $(\mu, D)$ and $(\mu', D')$ are equivalent, then they coincide.
\end{theorem}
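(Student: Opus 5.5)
The plan is to reduce the problem to comparing the $F$-norms of very simple elements, namely scalar multiples $t\,e$ of projections. For these elements \eqref{LSM-norm} can be computed explicitly. Since $e_{(\lambda,\infty)}(|te|)$ equals $e$ for $\lambda<t$ and $0$ for $\lambda\ge t$, we get
\[
\|te\|_{\mu,D}=\min\Bigl\{t,\ \inf_{0<\lambda<t}\max\{\lambda,\mu([D(e)>\lambda])\}\Bigr\},
\]
and similarly for $(\mu',D')$. All the information we extract comes from the equality $\|te\|_{\mu,D}=\|te\|_{\mu',D'}$ for suitable $e$ and $t$. Since $\mu,\mu'$ are strictly positive, they have the same null sets, so $\mu'=\rho\,\mu$ for some density $\rho>0$. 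The two goals are $\rho=1$, i.e. $\mu=\mu'$ on $\Sigma$, and $D=D'$.

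\textbf{Case (a).} If $\mathcal M$ is of type III, every nonzero projection is properly infinite. Hence $D(e)=D'(e)=\infty\cdot\mathbf 1_{c(e)}$, where $c(e)$ is the central support, so $D=D'$ is forced. Taking $e=z$ central and $t=1$ gives $\|z\|_{\mu,D}=\min\{1,\mu(z)\}=\mu(z)$, and likewise $\|z\|_{\mu',D'}=\mu'(z)$. Thus $\mu(z)=\mu'(z)$ for every central projection, i.e. $\mu=\mu'$. If $\mathcal M$ is of type II$_\infty$, the same computation with $e=z$ central still gives $\mu=\mu'$, because $z$ is properly infinite and so $D(z)=D'(z)=\infty\cdot\mathbf 1_z$.

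To get $D=D'$ in the semifinite case I would use the standard structure of dimension functions. A dimension function on a type II algebra is the restriction of an extended center-valued trace, so any two of them differ by a central density:
\[
D'=c\,D\quad\text{with } c\in L^+ \text{ finite and strictly positive a.e.}
\]
(For a II$_1$ factor, $c$ is a positive scalar.) I would either cite this from the literature on dimension functions or prove it by comparing $D$ and $D'$ on a maximal family of mutually equivalent orthogonal finite projections.

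Next suppose $A=[c>1+\varepsilon]$ has positive measure. Using the fact that in type II the range of $D$ contains all simple functions (in the II$_1$ case, those bounded by $D(1)$), I would choose a central $z\le A$ and a constant $a>0$ with the following properties:
\[
D(z)\ge a \text{ on } z,\qquad 0<a<\min\{\mu(z),\mu'(z)\}.
\]
I would then pick $e\le z$ with $D(e)=a\mathbf 1_z$, so that $D'(e)\ge a(1+\varepsilon)\mathbf 1_z$. For $t>1$ the formula gives
\[
\|te\|_{\mu,D}=\min\{a,\mu(z)\}=a,
\qquad
\|te\|_{\mu',D'}\ge\min\{a(1+\varepsilon),\mu'(z)\}>a,
\]
which is a contradiction. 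Symmetrically $[c<1-\varepsilon]$ is null, so $c=1$ and $D=D'$. With $D=D'$ known, the computation above with $t$ large and $e$ satisfying $D(e)=a\mathbf 1_z$ gives $\min\{a,\mu(z)\}=\min\{a,\mu'(z)\}$ for all admissible $a$ and $z$, which yields $\mu=\mu'$.

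\textbf{Case (b).} The same scheme applies, but now $\mu=\mu'$ cannot be read off from central projections alone, since $D(z)$ is finite. This is where atomlessness is used: it lets us choose the central projections $z$ inside any given set so that $\mu(z)$ and $\mu'(z)$ are small but positive, making the inequality $a<\mu(z),\mu'(z)$ and the comparisons above available. In the factor case the center is trivial, so $\mu=\mu'$ is automatic and only the scalar $c$ needs to be shown equal to $1$, which the computation above does.

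The main obstacle is the step $D'=cD$, together with the case (b) bookkeeping: we must obtain $\mu=\mu'$ and $c=1$ simultaneously from norms of the single family $te$. If relating $D$ and $D'$ through the center-valued trace turns out awkward, I would instead aim to show directly that the norms determine $\mu(z\cap[D(e)>\lambda])$ for all central $z$ and all $\lambda$, and then conclude from this identity.
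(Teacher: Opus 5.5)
Your case (a) and your step $D=D'$ match the paper. You get $\mu=\mu'$ from central projections as in Lemma~\ref{properly central}. You then use $D'=cD$ and a small projection of constant dimension below the measure of its central support, as in Lemma~\ref{equiv}. Choosing $a<\min\{\mu(z),\mu'(z)\}$ even gives $D=D'$ before knowing $\mu=\mu'$, a small improvement over the paper.

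The gap is the step $\mu=\mu'$ in case (b), the only place hypothesis (b) does any work. You say atomlessness makes $\mu(z),\mu'(z)$ small so that $a<\min\{\mu(z),\mu'(z)\}$ is available. Under that inequality, however, the identity $\min\{a,\mu(z)\}=\min\{a,\mu'(z)\}$ just reads $a=a$ and says nothing about $\mu$ versus $\mu'$. Example~1 in Section~\ref{s6} shows this concretely: it is type II$_1$ with atomic center, the same $D$, and $\mu\neq\mu'$. There every admissible pair satisfies $a\le\max_i d_i<\min\{\mu(z),\mu'(z)\}$, so your identity holds for all admissible $a,z$, yet $\mu\neq\mu'$. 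Your ``which yields $\mu=\mu'$'' is therefore unjustified as written.

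The inequality you need goes the other way. Take a non-null central $z$ and $r>0$ with $D(\mathbf 1)\ge r$ a.e.\ on $z$ and $\max\{\mu(z),\mu'(z)\}<r$, and set $a=r$. Then $\min\{a,\mu(z)\}=\mu(z)$ and $\min\{a,\mu'(z)\}=\mu'(z)$, hence $\mu(z)=\mu'(z)$. By Lemma~\ref{delta+} you can even take $e=z$.

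Such $z$ exist inside every non-null set $S$. Since $D(\mathbf 1)>0$ a.e., $S\cap[D(\mathbf 1)\ge r]$ is non-null for some $r>0$. Atomlessness then gives a non-null $z$ inside it with $\mu(z)+\mu'(z)<r$.

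Choosing $S=[\rho>1]$ (resp.\ $[\rho<1]$) gives $\mu'(z)=\int_z\rho\,d\mu>\mu(z)$ (resp.\ $<$), a contradiction, so $\rho=1$. This is the role of Lemmas~\ref{delta+} and~\ref{mu=mu} in the paper. There it is done by a countable partition and summation, and without needing $D=D'$ first.
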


Let $\cM$ be a type I von Neumann algebra with an atomless center $Z(\cM)$.
Fix  a faithful normal semifinite extended center-valued trace $T$ on $\cM$ such that $T(p)=1$ on $\Omega$, where $p$ is a faithful ($c(p)=\mathbf{1}$) abelian projection in $\cM$.

Let $D,D'$ be two dimension functions on $P(\cM)$ with  $D(p)=c$, $D'(p)=c'$.

Set
\begin{align*}
c_{\min} =\min\{c, \|\mathbf{1}\|_{\mu,D}\},\,\,\, D_{\min}=c_{\min}T,
\end{align*}
\begin{align*}
c_{\min}' =\min\{c', \|\mathbf{1}\|_{\mu',D'}\},\,\,\, D_{\min}'=c_{\min}'T.
\end{align*}

The next result provides  a criterion  for the equivalence of pairs of probability measures and dimension functions for type I von Neumann algebras.

\begin{theorem}\label{case of type I}
Let $\mathcal{M}$ be a type I von Neumann algebra, and let $(\mu, D)$ and $(\mu', D')$ be pairs, each consisting of a probability measure and a dimension function. Assume that $\mathcal{M}$ either has an atomless center  or is a factor. Then $(\mu, D)$ and $(\mu', D')$ are equivalent if and only if $\mu = \mu'$ and $D_{\min} = D'_{\min}$.
\end{theorem}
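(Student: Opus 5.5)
The plan is to reduce everything to a scalar multiple of the trace, show that the $F$-norm only sees $c$ through $c_{\min}$, and then read $\mu$ and $c_{\min}$ back off the norms of central elements and abelian projections.

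\textbf{Reduction.} On a type I algebra every projection is a sum of abelian projections. An abelian projection $q$ is equivalent to $p\,c(q)$, so $D(q)=c\,c(q)$. Using normality and additivity of $D$, this gives $D=c\,T$ on $P(\cM)$, where $c=D(p)$ is a strictly positive measurable function, and similarly $D'=c'T$. The key structural fact is that for every projection $e$ the function $T(e)$ takes values in $\{0,1,2,\dots,\infty\}$. Hence, wherever $e$ has nonzero central support, $T(e)\ge 1$. Consequently, for $e=e_{(\lambda,\infty)}(|x|)$ the superlevel set is
\[
[D(e)>\lambda]=[cT(e)>\lambda],
\]
and on the set $[c>\lambda]$ this equals the central support set of $e$.

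\textbf{Sufficiency.} Assume $\mu=\mu'$ and $D_{\min}=D'_{\min}$. Put $\kappa=\|\mathbf 1\|_{\mu,D}$. It suffices to show $\|x\|_{\mu,D}=\|x\|_{\mu,D_{\min}}$ for all $x$, and symmetrically for $(\mu',D')$.
\begin{enumerate}
\item For $\lambda<\kappa$: on $[c>\kappa]$ we have $c_{\min}=\kappa>\lambda$. So both $cT(e)>\lambda$ and $c_{\min}T(e)>\lambda$ reduce to $T(e)\ge 1$ there. On the complement $c=c_{\min}$. Hence the two functions $\lambda\mapsto\mu([\cdot>\lambda])$ coincide on $(0,\kappa)$.
\item For $\lambda\ge\kappa$: the term $\max\{\lambda,\cdot\}$ is at least $\kappa$ for both pairs.
\item It then remains to check that the two infima agree when restricted to $\lambda\ge\kappa$. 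I would compare $\|x\|$ with $\|\mathbf 1\|$ via the support projection $s(x)$, using that $\kappa$ is itself the infimum computed from the level sets $[c>\lambda]$ for $\lambda<1$.
\end{enumerate}
This bookkeeping around $\lambda\ge\kappa$ needs care, but it is elementary.

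\textbf{Necessity.} Assume the norms agree. I would test them on $t\,pz$, where $z$ is a central projection and $t>0$. Then $e_{(\lambda,\infty)}(|t\,pz|)=pz$ for $\lambda<t$, and this gives
\[
\|t\,pz\|_{\mu,D}=\min\Big\{t,\ \inf_{\lambda<t}\max\{\lambda,\mu(z\wedge[c>\lambda])\}\Big\}.
\]
The argument then splits by type.
\begin{enumerate}
\item \emph{Atomless center.} Using that the center is atomless, choose $z\le[c>\varepsilon]\cap[c'>\varepsilon]$ with $\mu(z),\mu'(z)<\varepsilon$. Then both norms reduce to $\mu(z)$ and $\mu'(z)$ respectively, so $\mu(z)=\mu'(z)$ for all small $z$. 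Every central projection is a countable disjoint sum of such pieces, so by countable additivity $\mu=\mu'$.
\item \emph{Recovering $c_{\min}$.} With $\mu=\mu'$ known, the set function $z\mapsto \inf_\lambda\max\{\lambda,\mu(z\wedge[c>\lambda])\}$ determines $\mu(z\wedge[c>\lambda])$ for all $z$ and all $\lambda$ below the relevant threshold. Since $z$ ranges over all central projections, this determines the sets $[c>\lambda]$ for $\lambda<\kappa$, and hence $c_{\min}=c'_{\min}$.
\item \emph{Factor case.} Here $\mu=\mu'$ is forced, since the center is trivial, and $c,c'$ are scalars. A direct computation gives $\|p\|=\min\{1,c\}$, while $\|\mathbf 1\|=\min\{1,cn\}$ for a type I$_n$ factor. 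Comparing these, and the norms of $t\,p$, yields $\min\{c,\kappa\}=\min\{c',\kappa'\}$.
\end{enumerate}

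\textbf{Main obstacle.} I expect the hardest step to be extracting the level sets $[c>\lambda]$ from the values of the norm on central pieces in the atomless case. What one observes is an infimum of a maximum, which only reveals $\mu(z\wedge[c>\lambda])$ up to the threshold $\lambda$. I would first try localizing to $z$ with $\mu(z)$ smaller than a given $\lambda$, together with a density argument in $\lambda$.
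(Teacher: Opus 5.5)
Your sufficiency argument is essentially the paper's. The paper handles your Step 3 through the truncation $x_\alpha$ of Proposition~\ref{alpha-0}, but all you need there is $\|x\|_{\mu,D}\le\|\mathbf{1}\|_{\mu,D}=\kappa$ and $\|x\|_{\mu,D_{\min}}\le\|\mathbf{1}\|_{\mu,D_{\min}}\le\kappa$. Both follow from monotonicity of $D$ and from $D_{\min}\le D$. Note also that $\kappa$ is computed from the level sets $[cT(\mathbf{1})>\lambda]$, not from $[c>\lambda]$. Your proof of $\mu=\mu'$ and your factor case are also fine.

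The genuine gap is the step ``Recovering $c_{\min}$'' when the center is atomless. Every test element $t\,pz$ lies in the abelian algebra $LS(Z(\cM)p)$, whose unit is $p$ and whose dimension function is $c$. One checks that $\|pz\|_{\mu,D}=\mu(z)$ if and only if $c\ge\mu(z)$ a.e.\ on $z$. So these values only reveal $[c\ge s]$ at levels $s$ with $\mu([c\ge s])\ge s$, i.e.\ below $\beta=\|p\|_{\mu,D}$. Indeed, by the implication (2)$\Rightarrow$(1) of Proposition~\ref{cc'}, the norm on all of $LS(Z(\cM)p)$ depends on $c$ only through $\beta$ and $\min\{c,\beta\}$. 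But $c_{\min}$ truncates $c$ at $\kappa=\|\mathbf{1}\|_{\mu,D}$, and $\kappa$ can be strictly larger than $\beta$ once $T(\mathbf{1})>1$. So your claim that these values determine $[c>\lambda]$ for all $\lambda<\kappa$ is false, and no localization or density argument in $\lambda$ can rescue it.

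Here is a concrete counterexample. Take $\cM=M_2(L_\infty[0,1])$ with Lebesgue measure $\mu$ and $p=e_{11}$. Fix a set $S$ with $\mu(S)=\frac{3}{10}$, and put $c=\frac12\chi_S+\frac{7}{20}\chi_{\Omega\setminus S}$ and $c'=\frac35\chi_S+\frac{7}{20}\chi_{\Omega\setminus S}$.
\begin{itemize}
\item For every central $z$ and every $t>0$ we get $\|t\,pz\|_{\mu,D}=\|t\,pz\|_{\mu,D'}=\min\{t,\mu(z),\frac{7}{20}\}$; here $\|p\|_{\mu,D}=\|p\|_{\mu,D'}=\frac{7}{20}$.
\item On the other hand $\kappa=\kappa'=\frac{7}{10}$, so $c_{\min}=\frac12\neq\frac35=c'_{\min}$ on $S$.
\item The pairs really are inequivalent. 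Take $R\subset\Omega\setminus S$ with $\mu(R)=\frac14$; the projection $y=z_R+z_Sp$ has $\|y\|_{\mu,D}=\frac12$ but $\|y\|_{\mu,D'}=\frac{11}{20}$.
\end{itemize}

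The missing idea is to test with elements whose dimension is boosted off a small set, so that their norm is close to $\kappa$. The paper does this as follows.
\begin{enumerate}
\item It passes to a finite corner $q\cM q$ with $\|q\|_{\mu,D}=\|\mathbf{1}\|_{\mu,D}$ (Lemma~\ref{reduce-to-finite}), which makes $D(\mathbf{1})$ finite.
\item It picks a set $A$ of measure $\frac1m$ on which $\frac1m<c_{\min}<c'_{\min}\le\kappa-\frac1m$, and takes $e=z_{\Omega\setminus A}+z_Ap$.
\item Since $\|z_A\|_{\mu,D}=\frac1m$, the triangle inequality gives $\|e\|_{\mu,D}\ge\kappa-\frac1m$.
\item It applies Proposition~\ref{cc'} to the abelian algebra $Z(\cM)e$, whose dimension function is $D(e)=\chi_{\Omega\setminus A}D(\mathbf{1})+\chi_Ac$. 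This yields $\min\{D(e),\|e\|_{\mu,D}\}=\min\{D'(e),\|e\|_{\mu,D'}\}$, hence $c=c'$ on $A$, a contradiction.
\end{enumerate}
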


Note that 
Theorems \ref{con-equiv} and \ref{case of type I} do not hold in the setting 
 when the von Neumann algebra has an atomic center, see Section \ref{s6}.

\begin{corollary}\label{uniq} 
Let $\mathcal{M}$ be a von Neumann algebra  and let $(\mu,D)$  be a pair consisting of  a probability measure and a dimension function. Assume that the algebra $\mathcal{M}$ satisfies  one of the following conditions: 
\begin{enumerate}
    \item[(a)] it is  of type II$_\infty$ or type III;
    \item[(b)] it is  of type II$_1$ with atomless center or it is a II$_1$ factor.
\end{enumerate}
Then the $F$-norm $\left\|\cdot \right\|_{\mu,D}$ is uniquely determined by its values on $P(\cM)$. 
\end{corollary}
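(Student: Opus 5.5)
The plan is to deduce the corollary directly from Theorem~\ref{con-equiv}, after fixing what ``uniquely determined by its values on $P(\cM)$'' means. We read it as follows. Suppose $(\mu',D')$ is another pair (probability measure and dimension function) with
\[
\|p\|_{\mu,D}=\|p\|_{\mu',D'} \quad \text{for every } p\in P(\cM).
\]
Then $\|x\|_{\mu,D}=\|x\|_{\mu',D'}$ for all $x\in LS(\cM)$. Theorem~\ref{con-equiv} only yields $(\mu,D)=(\mu',D')$ once equivalence is known on the whole of $LS(\cM)$. So the real content is to recover the pair $(\mu,D)$ from the projection values alone; equality of the $F$-norms then follows immediately from \eqref{LSM-norm}.

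First, compute $\|p\|_{\mu,D}$ for a projection $p$. For $\lambda<1$ we have $e_{(\lambda,\infty)}(p)=p$, and for $\lambda\ge 1$ it is $0$. Hence
\[
\|p\|_{\mu,D}=\min\Bigl\{1,\ \inf_{0<\lambda<1}\max\{\lambda,\mu([D(p)>\lambda])\}\Bigr\}.
\]

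Second, the key step is to show that the function $p\mapsto\|p\|_{\mu,D}$ determines $\mu$ on central projections and determines $D$. We use the structural hypotheses (a) and (b) to produce enough projections.

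\emph{Case (a).} For types II$_\infty$ and III, every central projection $z$ is the central support of infinite projections $q\le z$ with $D(q)=\infty\cdot z$. Then $\|q\|_{\mu,D}=\min\{1,\mu(z)\}=\mu(z)$, which recovers $\mu$, and therefore $\mu=\mu'$. In type III, $D$ takes only the values $0$ and $\infty$, so $D$ is $\infty\cdot\chi_{c(p)}$ and is fixed by the centre structure alone. In type II$_\infty$, $D(p)$ is pinned down by testing against $p\,z$ for central $z$: the values $\|pz\|$ determine $\mu([D(p)>\lambda]\cap z)$ for all $\lambda<\|pz\|$.

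\emph{Case (b).} For type II$_1$, use halvability. For any $t>0$ and central $z$, choose $q\le z$ with $D(q)=t\,D(\mathbf{1})z$ (via the centre-valued trace), together with further projections scaled by central elements. Varying $t$ and $z$, the numbers $\|q\|$ encode the distribution of $D(\mathbf{1})$ against $\mu$ on small levels. Atomlessness of the centre, or the factor case, is what lets us separate $\mu$ from $D$. In the factor case the data reduce to the single constant $D(\mathbf{1})$, which is read off from $\|\tfrac{s}{D(\mathbf{1})}\text{-projections}\|=s$ for small $s$.

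Third, conclude. Once $\mu=\mu'$ and $D=D'$, the $F$-norms agree. Alternatively, having shown they agree on projections and via the spectral description, invoke Theorem~\ref{con-equiv}.

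The main obstacle is the second step in the II$_1$ atomless-centre case. There the data $\mu([D(p)>\lambda])$ is only available for $\lambda<1$ and is truncated by $\min\{\lambda,\cdot\}$. To extract full information, I would try rescaling $p$ into subprojections $q\le p$ with $D(q)=\varepsilon D(p)$, so that all relevant thresholds fall below the truncation. I would then use atomlessness to localise to arbitrarily small central pieces, mirroring the argument presumably used for Theorem~\ref{con-equiv}(b).
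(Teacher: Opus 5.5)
\textbf{There is a genuine gap: your II$_\infty$ step fails as written, and the II$_1$ atomless-centre case is not carried out.}

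Your reduction is the right one, and it is what the paper does. The paper's proof of the corollary observes that the proofs of Lemmas~\ref{properly central}, \ref{mu=mu} and \ref{equiv} (hence of Propositions~\ref{III}, \ref{II-infty}, \ref{II-1}) only ever compare $\|p\|_{\mu,D}$ with $\|p\|_{\mu',D'}$ for projections $p$, namely $z_A$, $z_{A_i}$ and a halved projection $q$. So equality on $P(\mathcal{M})$ already forces $\mu=\mu'$ and $D=D'$. Your recovery of $\mu$ in case (a) and your type III argument match the paper. The problems are in the step ``projection values determine $(\mu,D)$'' in both type II cases.

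\emph{Case (a), type II$_\infty$.} Testing a fixed $p$ against central cut-downs $pz$ cannot pin down $D(p)$. Even granting that $\|pz\|_{\mu,D}$ determines $\mu([D(p)>\lambda]\cap z)$ for $\lambda<\|pz\|_{\mu,D}\le 1$, this only sees $D(p)$ below the truncation level. Concretely, take a II$_\infty$ factor with $D=\mathrm{tr}$ and $D'=2\,\mathrm{tr}$. Every $p$ with $\mathrm{tr}(p)\ge 1$ satisfies $\|p\|_{\mu,D}=\|p\|_{\mu,D'}=1$, and the only central projections are $0$ and $\mathbf{1}$. The missing idea is the halving argument of Lemma~\ref{equiv}, which you need in case (a) too. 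Once $\mu=\mu'$, take $p$ with $D(p)=\lambda_0\chi_A$ and $B\subset A$ with $\mu(B)=r>0$ on which $D'(p)\ge\lambda_0+\varepsilon$. Halving $z_Bp$ $n$ times with $(\lambda_0+\varepsilon)/2^n<r$ gives $q$ with $D(q)=2^{-n}\lambda_0\chi_B$ and $D'(q)\ge 2^{-n}(\lambda_0+\varepsilon)\chi_B$. Hence $\|q\|_{\mu,D}=2^{-n}\lambda_0<2^{-n}(\lambda_0+\varepsilon)\le\|q\|_{\mu,D'}$. Making $D(q)$ small relative to $\mu(B)$ is what defeats the truncation; you mention this rescaling only for case (b), and only as something you would try.

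\emph{Case (b), atomless centre.} Here the recovery of $\mu$ is not carried out. The device you emphasise, subprojections with $D(q)=tD(\mathbf{1})z$, points the wrong way for $\mu$. Since $D(z_A)=\chi_A D(\mathbf{1})$ is a.e.\ finite, $\|z_A\|_{\mu,D}$ can be strictly smaller than $\mu(A)$, and shrinking dimensions only makes this worse. What must shrink is the central support. The precise statement is Lemma~\ref{delta+}: if $D(z_A)\ge\mu(A)$ a.e.\ on $A$, then $\|z_A\|_{\mu,D}=\mu(A)$. Atomlessness lets you split any $A$ into pieces $A_i$ satisfying this simultaneously for $(\mu,D)$ and $(\mu',D')$ (Steps 1--3 of Lemma~\ref{mu=mu}). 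Then $\mu(A_i)=\|z_{A_i}\|_{\mu,D}=\|z_{A_i}\|_{\mu',D'}=\mu'(A_i)$, and summing gives $\mu=\mu'$. Only after that does the halving argument give $D=D'$.

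Finally, your alternative ending (agreement on projections plus a ``spectral description'', then Theorem~\ref{con-equiv}) is circular. Theorem~\ref{con-equiv} presupposes equality of the $F$-norms on all of $LS(\mathcal{M})$, which is exactly the conclusion of the corollary.
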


Corollary~\ref{uniq} shows that, under the given conditions, the $F$-norm $\left\|\cdot\right\|_{\mu,D}$ is uniquely determined by its values on the projection lattice $P(\cM)$. 
This complements the known fact that a von Neumann algebra as a Jordan $^*$-algebra is uniquely determined by $P(\cM)$ \cite{AK2021, Mori2020, Mori2023}. 
Since the $F$-norm is constructed from a probability measure $\mu$ and a dimension function $D$, 
the corollary indicates that these data are essentially encoded in the projection lattice whenever the algebra is either sufficiently non-atomic or a continuous factor.
The cases covered exclude type I algebras and type II$_1$ algebras whose centers contain at least two atoms; for those, the projection lattice may not capture the fine structure needed to recover the $F$-norm uniquely. 
Hence, Corollary~\ref{uniq}   provides a sharp uniqueness criterion for the $F$-norm in terms of projections.

\section{Preliminaries}

\subsection{Locally measurable operators}

Let $H$ be a complex Hilbert space and let $B(H)$ denote the $^\ast$-algebra of all bounded linear operators on $H$. Let $\cM$ be a von Neumann algebra contained in $B(H)$. As usual, we denote by $P(\cM)$ the set of all projections in $\cM$.

Recall that a densely defined closed linear operator $x : \mathrm{dom}(x) \to H$ (where $\mathrm{dom}(x)$ is a dense linear subspace of $H$) is said to be \textit{affiliated} with $\cM$ if
\[
yx \subset xy \quad \text{for all } y \in \cM',
\]
where $\cM'$ denotes the commutant of $\cM$ (see \cite{BCLSZ,DPS}).
An operator $x$ affiliated with $\cM$ is called \textit{measurable} (with respect to $\cM$) if
\[
e_{(\lambda,\infty)}(|x|) \text{ is a finite projection for some } \lambda > 0,
\]
where $e_{(\lambda,\infty)}(|x|)$ is the spectral projection of $|x|$ corresponding to the interval $(\lambda, \infty)$.

If $\cM$ is semifinite and  $\tau$ is a faithful normal semifinite trace on $\cM$, a measurable  operator $x$ affiliated with $\cM$ is called \textit{$\tau$-measurable} if
\[
\tau\big(e_{(\lambda,\infty)}(|x|)\big) < \infty \text{ for some } \lambda > 0.
\]
We denote by $S(\cM)$ and $S(\cM, \tau)$ the sets of all measurable and $\tau$-measurable operators, respectively  (see \cite{BCLSZ,DPS}).

A closed linear operator $x$ acting on $H$ is said to be
locally measurable with respect to the von Neumann algebra $\cM$ if $x\eta \cM$ and there exists
a sequence of central projections $\{z_n\}_{n\ge1}$ in $P(Z(\cM))$ such that $z_n \uparrow\mathbf{1}$ and $xz_n \in  S(\cM)$
for all $n\ge1$.

The set of all operators locally measurable with respect to the von Neumann algebra $\cM$ is denoted by $LS(\cM)$.

For $x$, $y \in LS(\cM)$, it is well known that $x+y$ and $xy$ are densely defined and preclosed operators. Moreover, their closures, as well as $x^\ast$, belong to $LS(\cM)$.
Equipped with these operations, each of $LS(\cM)$, $S(\cM)$ and $S(\cM, \tau)$ forms a unital $^\ast$-algebra over $\mathbb{C}$ (see \cite{BCLSZ,DPS}).

Clearly, $\cM$ itself is a $^\ast$-subalgebra of $S(\cM, \tau)$, $S(\cM)$ and  $LS(\cM)$.

For an arbitrary $x \in LS(\cM)$, we denote:
\begin{itemize}
    \item \textit{left support} $l(x)$: the smallest projection $p \in \cM$ such that $px = x$;
    \item \textit{right support} $r(x)$: the smallest projection $q \in \cM$ such that $xq = x$;
    \item \textit{support projection} $s(x) = l(x) \vee r(x)$;
    \item \textit{central support} $c(x)$: the smallest central projection $z \in P(Z(\cM))$ such that $xz = x$.
\end{itemize}
Note that $s(|x|) = l(|x|) = r(|x|)$ for all $x \in LS(\cM)$.
A projection $p\in \cM$ is said to be faithful if $c(p)=\mathbf{1}$.

\subsection{Dimension functions on the lattice of projections}
Let $\mathcal{M}$ be an arbitrary von Neumann algebra. Assume  $\varphi$ is a $^*$-isomorphism from $Z(\cM)$ onto the $^*$-algebra $L_{\infty}(\Omega,\Sigma,\mu)$, where $(\Omega,\Sigma,\mu)$ is a probability space. Denote by $L^{+}(\Omega,\Sigma,\mu)$ the set of all measurable functions defined on $(\Omega,\Sigma,\mu)$ and taking values in the extended half-line $[0,\infty]$ (functions that are equal almost everywhere are identified). For $f\in L^{+}(\Omega,\Sigma,\mu)$, the support projection $s(f)$ is defined as (the equivalence class) $\chi_A$, where $A=\{w\in\Omega:f(w)\ne0\}\in\Sigma$.

A mapping $D:P(\cM)\to L^{+}(\Omega,\Sigma,\mu)$ is said to be a \textit{dimension function} if the following hold\cite[Definition~1.4]{Segal1953}:
\begin{enumerate}
\item If $p\in P(\cM)$ and $p\ne 0$, then $D(p)\ne0$;
\item \label{D-finite}The function $D(p)$ is almost everywhere finite if and only if $p$ is a finite projection;
\item \label{perp}$D(p\vee q)=D(p)+D(q)$ if $pq=0$;
\item $D(p)=D(q)$ if $p\sim q$, $p$, $q\in P(\cM)$;
\item \label{D-finite5}  $D(zp)=\varphi(z)D(p)$ for any $z\in P(Z(\cM))$ and $p\in P(\cM)$;
\item If $p_i$, $p\in P(\cM)$, $i\in I$, and $p_i\uparrow p$, then $D(p)=\sup\limits_{i\in I}D(p_i)$.
\end{enumerate}

Let $p$ be a faithful projection. Then $Z(p\mathcal{M}p)$ can be identified with $Z(\mathcal{M})p$ via the correspondence $x \in Z(\mathcal{M}) \mapsto xp \in p\mathcal{M}p$ \cite[Proposition II.3.10]{Tak1}. In this case, the restriction $D|_{P(p\mathcal{M}p)}$ defines a dimension function on $P(p\mathcal{M}p)$ (see the proof of \cite[Lemma 4.3.29]{BCLSZ}).

Since $\varphi:Z(\cM)\to L_{\infty}(\Omega,\Sigma,\mu)$ is  a $^*$-isomorphism, any projection $z\in P(Z(\cM))$ is in the form
\begin{align}\label{zA}
z= z_A = \varphi^{-1}(\chi_A),\,\, A\in \Sigma.
\end{align}

Let $\cM$ be a von Neumann algebra and let $z\in P(Z(\cM))$ be such that $\cM z$ is a semifinte von Neumann algebra and $\cM(\mathbf{1}-z)$ is a type III von Neumann algebra. Let   $T$ be a fixed faithful normal semifinite extended center-valued trace $T$ on  $\cM z$.
Any dimension  function   
$D:P(\cM)\to L^{+}(\Omega,\Sigma,\mu)$ is of the following  form~\cite[Theorems 1.11.2 and 1.11.13]{BCLSZ}
\begin{align}\label{dim-fc}
D(p)=c T(zp)+\infty\cdot\varphi(c((\mathbf{1}-z)p)),\,\, p\in P(\cM),
\end{align} 
for some strictly positive measurable function $c:\Omega\to \mathbb{R}_+$. In particular, if  $\cM$ is a semifinite von Neumann algebra, then $D(p)=cT(p)$ for any $p\in P(\cM)$, showing that for a semifinite von Neumann algebra the dimension function coincides with the reduction of the extended center-valued trace $T$ to $P(\cM)$ (see also \cite[Theorem 2.34]{Tak1}).

 For any properly infinite projection $p\in P(\cM)$, we have\cite[Theorem 1.11.3]{BCLSZ}
$$
D(p)=\infty\cdot\varphi(c(p)).
$$ 

 Assume  $\cM=B(H)$ is a type I-factor with the canonical trace ${\rm Tr}$ such that 
\[
{\rm Tr}(p)={\rm rank}(p), \,\, p\in P(\cM).
\]
Then the dimension function $D$ in \eqref{dim-fc} can be represented as follows
\[
D(p) = \delta \cdot {\rm rank}(p),\,\,p\in P(B(H)),
\]
where $\delta$ is a fixed positive number.

\subsection{$F$-norm on the algebra of locally measurable operators}
Let $\cM$ be a von Neumann algebra, for any $x\in LS(\cM)$, define\footnote{Since $\mu$ is a probability measure, the infimum on the right-hand side of \eqref{LSM-norm} is attained at some point in the interval $(0,1)$.}
\begin{align}\label{second-form}
\|x\|_{\mu,D} = \inf_{0<\lambda<1} \max\left\{ \lambda, \mu\!\left(\left[D\left(e_{(\lambda,\infty)}(|x|)\right) > \lambda\right]\right) \right\}.
\end{align}
where $e_{(\lambda,\infty)}(|x|)$ is the spectral projection of $|x|$ corresponding to the interval $(\lambda, \infty)$. 
As mentioned in the introduction, the mapping $\left\|\cdot\right\|_{\mu,D}:LS(\cM)\to \bR_+$ is an $F$-norm on $LS(\cM)$, 
and  the metric topology generated by the above $F$-norm coincides with the so-called local measure topology on $LS(\cM)$
(see \cite[Page 262]{Yeadon 1973}).

Note that \cite[Page 204]{BCLSZ}, the $F$-norm can also be represented in the following.
\begin{align}\label{first-form}
\|x\|_{\mu,D}=\inf\limits_{\lambda>0}\{\lambda:\mu([D(e_{(\lambda,\infty)}(|x|))>\lambda])\leq\lambda\}.
\end{align}

In the following, we will frequently use the following expression for the $F$-norm of projections (see \cite[Proposition 4.3.11]{BCLSZ}, \cite[page 262]{Yeadon 1973}) 
\begin{align}\label{p-norm}
\|p\|_{\mu,D}=\inf\limits_{0<\lambda<1}\max\{\lambda,\mu([D(p)>\lambda])\}, \,\, p\in P(\cM).
\end{align} 

For any $x\in \cM$, we have \cite[Proposition 4.3.18(ii)]{BCLSZ}
\begin{align}\label{lsmm}
\|x\|_{\mu,D}\le \|x\|_\cM.
\end{align}

We end this section with the following useful property of $\norm{\cdot}_{\mu,D}$. 
\begin{prop}\label{alpha-0} Let $\alpha: =\|\mathbf{1}\|_{\mu,D}$ and  $x\in LS(\cM)$. Then
    \begin{align*}
    \|x\|_{\mu,D}=\|x_\alpha\|_{\mu,D} \le \alpha,
    \end{align*}
where $x_\alpha=|x|e_{(0,\alpha)}(|x|)+\alpha e_{[\alpha,\infty)}(|x|)$.
\end{prop}

\begin{proof}
    Let
    \[
    y=|x|e_{(0,1]}(|x|)+e_{(1,\infty)}(|x|),
    \]
    then
    \begin{align*}
        e_{(\lambda,\infty)}(|x|)=e_{(\lambda,\infty)}(y),\,\,\text{for } 0<\lambda<1.
    \end{align*}
    Hence, by (\ref{second-form}),
    \begin{align*}
    \|x\|_{\mu,D}=\|y\|_{\mu,D}.
    \end{align*}
    Observe that
    \begin{align*}
        0\leq y\le  \mathbf{1}.
    \end{align*}
    Therefore, by the monotonicity of $F$-norm $\left\|\cdot \right\|_{\mu,D}$ on the positive part of $LS(\cM)$ \cite[Proposition 4.3.13 (\romannumeral3)]{BCLSZ}, we obtain
    \[
    \|y\|_{\mu,D}\le \|\mathbf{1}\|_{\mu,D}=\alpha,
    \]
    and consequently $\|x\|_{\mu,D}\le \alpha$.

Since $x_\alpha \le |x|$, we have $\|x_\alpha\|_{\mu,D}\le \|x\|_{\mu,D}$\cite[Proposition 4.3.13]{BCLSZ}.

For $\lambda\ge\alpha$, we have 
    \begin{align*}
    \max\{\lambda,\mu([D(e_{(\lambda,\infty)}(x_\alpha))>\lambda])\}\ge \alpha\ge \|x\|_{\mu,D}.
    \end{align*}
For $0<\lambda<\alpha$ we have
$e_{(\lambda, \infty)}(x_\alpha)=e_{(\lambda, \infty)}(|x|),$ and hence,
    \begin{align*}
    \max\{\lambda,\mu([D(e_{(\lambda,\infty)}(x_\alpha))>\lambda])\}=\max\{\lambda,\mu([D(e_{(\lambda,\infty)}(|x|))>\lambda])\} \stackrel{\eqref{second-form}}{\ge} \|x\|_{\mu,D}.
    \end{align*}
    Consequently, by definition, we have 
    \begin{align*}
    \|x_\alpha\|_{\mu,D} & \ge \|x\|_{\mu,D},
    \end{align*}
   which  completes the proof.
\end{proof}

\section{Isometries are Jordan $^*$-isomorphisms multiplied by an unitary}
The main result of this section is the following theorem. 

\begin{theorem}\label{isometry} Let $\mathcal{M}$ and $\mathcal{N}$ be von Neumann algebras and let  $\Phi:LS(\cM) \to LS(\cN)$ be  a surjective isometry.
Then  $\Phi$ is in the form \eqref{gen-form}.
\end{theorem}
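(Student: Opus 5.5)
Our plan is to reduce Theorem \ref{isometry} to Kadison's theorem on the von Neumann algebras themselves. The key is a metric characterization of the bounded part of $LS(\cM)$. By Proposition \ref{alpha-0} every element has norm at most $\alpha=\|\mathbf 1\|_{\mu,D}$, so the $F$-norm does not see large values directly. The information must therefore come from scaling. For $x\in LS(\cM)$ we consider the function $t\mapsto\|tx\|_{\mu,D}$ for $t>0$. We aim to show that $x\in\cM$ with $\|x\|_\cM\le 1$ if and only if $\|tx\|_{\mu,D}\le \|t\mathbf 1\|_{\mu,D}$ for all $t>0$, or an analogous inequality involving tails as $t\to\infty$.

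The forward direction follows from monotonicity of the $F$-norm on positive operators together with $|tx|\le t\|x\|\mathbf 1$. For the converse, suppose $\|x\|>1$. Then $e_{(1+\varepsilon,\infty)}(|x|)=q\neq0$, and we have $t|x|\ge t(1+\varepsilon)q$. We want to compare this with $\|t\mathbf 1\|$ for suitable $t$. Here we must use the precise formula \eqref{p-norm} and the structure of $D$ near the central support of $q$. This comparison can fail if $D(q)$ is tiny, so the characterization probably needs a more delicate invariant. Examples are the set of $x$ such that $\|x+y\|\ge$ something for all $y$ of small norm, or an extreme-point or "unitary-like" characterization: elements $u$ for which $\|u y\|=\|y\|$ in a metric sense. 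We expect this step to be the main obstacle, and we would first try to characterize unitaries of $\cN$ as those $v$ with $\|v\|_{\mu,D}=\|\mathbf 1\|_{\mu,D}$ whose "distribution" $t\mapsto \|t f(|v|)\|$ agrees with that of $\mathbf 1$ for every continuous $f$.

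Once we know $\Phi$ maps the unit ball of $\cM$ onto the unit ball of $\cN$, or maps unitaries to unitaries, linearity gives a surjective linear map $\Phi|_\cM:\cM\to\cN$. If it is isometric for the operator norm, Kadison's theorem yields $\Phi(x)=wJ_0(x)$ with $w=\Phi(\mathbf 1)$ unitary in $\cN$ and $J_0$ a Jordan $^*$-isomorphism of $\cM$ onto $\cN$. If instead we only know that unitaries go to unitaries, the Russo--Dye theorem gives the same conclusion.

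Next we extend to $LS(\cM)$. $J_0$ maps projections to projections and preserves orthogonality and central supports. Hence it extends uniquely to a Jordan $^*$-isomorphism $J:LS(\cM)\to LS(\cN)$ that is continuous in the local measure topology. Since $\cM$ is dense in $LS(\cM)$ for this topology, and $\Phi$ is an isometry and hence continuous, the identity $\Phi(x)=wJ(x)$ passes from $\cM$ to all of $LS(\cM)$. Multiplication by a fixed $w$ is continuous in $LS(\cN)$. This gives \eqref{gen-form}.
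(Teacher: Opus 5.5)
There is a genuine gap. The step you yourself flag as the main obstacle is never established: showing that $\Phi$ maps $\cM$ onto $\cN$ isometrically for the operator norm (or unitaries onto unitaries). Everything after it depends on this. The ``unitary-like'' and ``distribution of $f(|v|)$'' characterizations you mention remain speculation.

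The missing idea is to read off the operator norm from the scaling function $t\mapsto\|tx\|_{\mu,D}$ as $t\downarrow 0$, not as $t\to\infty$. Large $t$ cannot detect boundedness: $\|tx\|_{\mu,D}\le\alpha$ always (Proposition \ref{alpha-0}), and the relevant spectral projections $e_{(\lambda/t,\infty)}(|x|)$ simply increase to the support of $|x|$. For small $t$, by contrast:
\begin{itemize}
\item[(i)] $\lim_{t\downarrow 0}\|tx\|_{\mu,D}/t=\|x\|_\cM$ for $x\in\cM$;
\item[(ii)] the limit is $+\infty$ for $x\in LS(\cM)\setminus\cM$. To see this, apply (i) to the nonzero bounded truncations $|x|e_{(n,k)}(|x|)\le|x|$, which have operator norm at least $n$, and use monotonicity.
\end{itemize}
Your worry that the comparison fails when $D(q)$ is tiny disappears in this regime. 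Let $c=\|x\|_\cM$, $q=e_{(c-r,\infty)}(|x|)\neq 0$ and $\gamma=\mu([D(q)>0])>0$, and choose $\delta>0$ with $\mu([D(q)>\delta])>\gamma/2$. As soon as $t(c-r)<\min\{\delta,\gamma/2\}$, every $\lambda<t(c-r)$ gives $\mu([D(e_{(\lambda/t,\infty)}(|x|))>\lambda])\ge\mu([D(q)>\delta])>\gamma/2>t(c-r)$. Every $\lambda\ge t(c-r)$ trivially contributes at least $t(c-r)$. Hence $\|tx\|_{\mu,D}\ge t(c-r)$, however small $D(q)$ is. The matching upper bound $\|tx\|_{\mu,D}\le t\|x\|_\cM$ is \eqref{lsmm}.

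This normalized limit is intrinsic, which is what makes it transportable. Since $\|t\Phi(x)\|_{\mu_\cN,D_\cN}=\|tx\|_{\mu_\cM,D_\cM}$ for all $t>0$, every $x\in\cM$ is sent to some $\Phi(x)\in\cN$ with $\|\Phi(x)\|_\cN=\|x\|_\cM$. Applying the same argument to the isometry $\Phi^{-1}$ gives $\Phi(\cM)=\cN$, and Kadison's theorem then applies. This is exactly the paper's route (Lemmas \ref{boundedcase}, \ref{unboundedcase} and Proposition \ref{main:prop}).

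Your tentative characterization ``$\|x\|_\cM\le 1$ iff $\|tx\|\le\|t\mathbf{1}\|$ for all $t>0$'' is in fact true, by (i) and (ii). As stated, though, it is not usable. The reference functions $t\mapsto\|t\mathbf{1}\|_{\mu_\cM,D_\cM}$ and $t\mapsto\|t\mathbf{1}\|_{\mu_\cN,D_\cN}$ live on different algebras, and nothing guarantees they coincide before you know $\Phi(\mathbf{1})$ is unitary. The quantity $\lim_{t\downarrow0}\|tx\|/t$ avoids any reference element.

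Your extension step is fine in outline. The simplest justification is that $w^*\Phi$ is itself an $F$-norm isometry, because $|w^*y|=|y|$, and hence continuous. It agrees with $J_0$ on the dense subalgebra $\cM$, so by continuity of the algebraic operations in the local measure topology it is the required Jordan $^*$-isomorphism.
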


Theorem~\ref{isometry} shows that the structure of isometries on $LS(\cM)$ parallels Kadison’s description for
$C^*$-algebras, despite the presence of unbounded operators.

We begin by establishing a fundamental asymptotic property of the
$F$-norm $\norm{\cdot}_{\mu,D}$. The key observation is that the asymptotic behavior of the
$F$-norm provides a criterion for distinguishing between bounded and unbounded operators.

\begin{lemma}\label{boundedcase}
 Let $\cM$ be a von Neumann algebra and let  $x\in \cM$. Then
\[
\lim_{\lambda\downarrow 0}\frac{\|\lambda x\|_{\mu,D}}{\lambda} = \|x\|_\cM.
\]
\end{lemma}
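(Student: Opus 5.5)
Set $a=\|x\|_\cM$. If $a=0$ there is nothing to prove, so assume $a>0$. The plan is to establish matching upper and lower bounds for $\|\lambda x\|_{\mu,D}/\lambda$ once $\lambda$ is small.

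\emph{Upper bound.} By \eqref{lsmm} we have $\|\lambda x\|_{\mu,D}\le \|\lambda x\|_\cM=\lambda a$. Hence $\|\lambda x\|_{\mu,D}/\lambda\le a$ for every $\lambda>0$. We could equally argue from the definition: take the parameter $\lambda a$ in \eqref{first-form}. Then $e_{(\lambda a,\infty)}(\lambda|x|)=e_{(a,\infty)}(|x|)=0$, so the measure condition holds trivially.

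\emph{Lower bound.} Fix $0<\varepsilon<a$ and put $p=e_{(a-\varepsilon,\infty)}(|x|)$. Then $p\neq 0$ because $a=\||x|\|_\cM$. By property (1) of dimension functions $D(p)\neq 0$, so $\beta:=\mu([D(p)>\delta])>0$ for some $\delta>0$. We show that for small $\lambda$ one has $\|\lambda x\|_{\mu,D}\ge \lambda(a-\varepsilon)$.

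Take $0<t<\lambda(a-\varepsilon)$ in \eqref{second-form}. Then
\[
e_{(t,\infty)}(\lambda|x|)=e_{(t/\lambda,\infty)}(|x|)\ge p,
\]
since $t/\lambda<a-\varepsilon$. By monotonicity of $D$ (a consequence of additivity (3)), $D(e_{(t,\infty)}(\lambda|x|))\ge D(p)$.

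Now assume $\lambda(a-\varepsilon)<\min\{\delta,\beta\}$. Then $t<\delta$, so
\[
\mu\bigl([D(e_{(t,\infty)}(\lambda|x|))>t]\bigr)\ge \mu([D(p)>\delta])=\beta>\lambda(a-\varepsilon).
\]
Consequently $\max\{t,\mu(\cdots)\}>\lambda(a-\varepsilon)$ for all such $t$. For $t\ge\lambda(a-\varepsilon)$ the maximum is trivially at least $\lambda(a-\varepsilon)$. Taking the infimum gives $\|\lambda x\|_{\mu,D}\ge\lambda(a-\varepsilon)$ for all sufficiently small $\lambda$.

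Combining the two bounds,
\[
a-\varepsilon\le \liminf_{\lambda\downarrow0}\frac{\|\lambda x\|_{\mu,D}}{\lambda}\le\limsup_{\lambda\downarrow0}\frac{\|\lambda x\|_{\mu,D}}{\lambda}\le a.
\]
Letting $\varepsilon\to0$ gives the claim.

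The only delicate point is the lower bound. We need the threshold $\delta$ and the mass $\beta$ to be independent of $\lambda$. This is why we fix the single nonzero projection $p$ first, and use the faithfulness of $D$ together with $\mu$ being a measure to obtain a set of positive measure on which $D(p)$ exceeds a positive constant.
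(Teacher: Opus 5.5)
Your proposal is correct and follows essentially the same route as the paper. The upper bound comes from $\|\lambda x\|_{\mu,D}\le\lambda\|x\|_\cM$, and the lower bound fixes the nonzero projection $p=e_{(a-\varepsilon,\infty)}(|x|)$ and a $\lambda$-independent threshold $\delta$ with $\mu([D(p)>\delta])>0$. The only differences are cosmetic: you argue directly rather than by contradiction, you use $\beta=\mu([D(p)>\delta])$ in place of the paper's $\gamma/2$, and you handle $x=0$ separately.
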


\begin{proof} The idea is to compare the scaling behavior of the $F$-norm with the operator norm using spectral projections of
$|x|$.

By \eqref{lsmm}, we have $\|\lambda x\|_{\mu,D}\le \lambda\|x\|_\cM$ for all $\lambda>0$.
Thus
\begin{align*}
\limsup_{\lambda\to 0}\frac{\|\lambda x\|_{\mu,D}}{\lambda} \le \|x\|_\cM.
\end{align*}

For the lower bound, fix $r\in(0,c)$, where $c:=\|x\|_\cM$. Then the spectral projection $p=e_{(c-r,\infty)}(|x|)$ is nonzero, and hence, $D(p)>0$ on a set of positive measure. Since  $\mu([D(p)>t])\uparrow \mu([D(p)>0])$ as $t\downarrow 0$, we can choose $\delta>0$ such that
\begin{align}\label{muDp}
\mu([D(p)>\delta]) > \gamma/2,
\end{align}
where $\gamma = \mu([D(p)>0])>0$. Take $\lambda$ small enough so that 
\begin{align}\label{c-ral}
\lambda(c-r) < \min\{\delta, \gamma/2\}.
\end{align}

We claim that $\|\lambda x\|_{\mu,D} \ge  \lambda(c-r)$. 
Assume by contradiction  that $\|\lambda x\|_{\mu,D} < \lambda(c-r)$.
By the definition of the $F$-norm in~\eqref{second-form} there exists $s>0$ with
\[
\max\{\lambda s,\; \mu([D(e_{(s,\infty)}(|x|)) > \lambda s])\} < \lambda(c-r).
\]
Hence, $\lambda s < \lambda(c-r)$ and
\begin{align}\label{D(e)}
\mu([D(e_{(s,\infty)}(|x|)) > \lambda s]) < \lambda(c-r)\stackrel{\eqref{c-ral}}{<}\gamma/2.
\end{align}
The first inequality gives $s < c-r$, and so, the spectral projection
$e_{(s,\infty)}(|x|)$ dominates $e_{(c-r,\infty)}(|x|)=p$.  Consequently,
$D(e_{(s,\infty)}(|x|)) \ge D(p)$, and therefore, by $\lambda s \le \lambda(c-r) \stackrel{\eqref{c-ral}}{<} \delta$, we have 
\[
[D(e_{(s,\infty)}(|x|)) > \lambda s] \supseteq [D(p) > \lambda s] \supseteq [D(p) > \delta].
\]   Hence
\[
\mu([D(e_{(s,\infty)}(|x|)) > \lambda s]) \ge \mu([D(p) > \delta]) \stackrel{\eqref{muDp}}{>} \gamma/2.
\]
However, the last inequality contradicts \eqref{D(e)}. From this contradiction we obtain that $\|\lambda x\|_{\mu,D} \ge \lambda(c-r)$ for all small $\lambda$ satisfying~\eqref{c-ral}.  Letting $r\to0$ gives $\liminf\limits_{\lambda\to0}\frac{\|\lambda x\|_{\mu,D}}{\lambda}\ge c$.  Combined with the upper limit, the limit exists and is equal to $\|x\|_\cM$. The proof is complete.
\end{proof}

\begin{lemma}\label{unboundedcase}
Let $\cM$ be a von Neumann algebra and let $x\in LS(\cM)\setminus \cM$. Then
\[
\lim_{\lambda\downarrow 0}\frac{\|\lambda x\|_{\mu,D}}{\lambda} = \infty.
\]
\end{lemma}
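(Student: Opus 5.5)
The plan is to show that for every $N>0$ we have $\|\lambda x\|_{\mu,D}\ge \lambda N$ for all sufficiently small $\lambda>0$. This mirrors the lower-bound argument in Lemma~\ref{boundedcase}, with the constant $c-r$ replaced by an arbitrary $N$.

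Since $x\notin\cM$, the operator $|x|$ is unbounded. Hence for every $N>0$ the spectral projection $p_N=e_{(N,\infty)}(|x|)$ is nonzero. If it vanished for some $N$, then $|x|\le N$, so $x$ would be bounded and affiliated with $\cM$, hence $x\in\cM$. Property (1) of dimension functions then gives $D(p_N)\neq 0$, so $\gamma:=\mu([D(p_N)>0])>0$. As in \eqref{muDp}, continuity of $\mu$ along the increasing sets $[D(p_N)>t]$ as $t\downarrow0$ yields some $\delta>0$ with $\mu([D(p_N)>\delta])>\gamma/2$.

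Next choose $\lambda$ so small that $\lambda N<\min\{\delta,\gamma/2\}$, and suppose, for contradiction, that $\|\lambda x\|_{\mu,D}<\lambda N$. By \eqref{second-form}, and since $e_{(\lambda s,\infty)}(|\lambda x|)=e_{(s,\infty)}(|x|)$, there is some $s>0$ with
\[
\max\bigl\{\lambda s,\ \mu([D(e_{(s,\infty)}(|x|))>\lambda s])\bigr\}<\lambda N.
\]
The first part gives $s<N$, so $e_{(s,\infty)}(|x|)\ge p_N$. Monotonicity of $D$, which follows from additivity (3), then gives $D(e_{(s,\infty)}(|x|))\ge D(p_N)$. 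Using $\lambda s<\lambda N<\delta$, we get
\[
\mu([D(e_{(s,\infty)}(|x|))>\lambda s])\ge \mu([D(p_N)>\delta])>\gamma/2>\lambda N,
\]
which is a contradiction. Hence $\|\lambda x\|_{\mu,D}/\lambda\ge N$ for all small $\lambda$, and since $N$ is arbitrary, $\liminf_{\lambda\downarrow0}\|\lambda x\|_{\mu,D}/\lambda=\infty$.

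The only point needing care is the first step: showing that $x\in LS(\cM)\setminus\cM$ forces $e_{(N,\infty)}(|x|)\neq0$ for every $N$. This uses the standard fact that a closed, densely defined operator affiliated with $\cM$ is bounded (and hence lies in $\cM$) exactly when $|x|$ has bounded spectrum, via the polar decomposition $x=v|x|$ with $v\in\cM$. The values $D(p_N)$ may be infinite on parts of $\Omega$ (for example for properly infinite $p_N$), but this causes no difficulty, since only $\mu([D(p_N)>\delta])$ enters the argument.
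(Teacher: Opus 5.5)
Your proof is correct, but it is organized differently from the paper's.

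\textbf{The paper's route.} The paper does not redo the estimate. It picks $k_n>n$ with $e_{(n,k_n)}(|x|)\neq 0$. It then uses monotonicity of $\|\cdot\|_{\mu,D}$ on positive operators, a cited external fact, to get $\|\lambda x\|_{\mu,D}\ge \|\lambda |x|e_{(n,k_n)}(|x|)\|_{\mu,D}$. Finally it applies Lemma~\ref{boundedcase} as a black box to this bounded truncation, whose operator norm is at least $n$.

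\textbf{Your route.} You notice that the lower-bound half of the proof of Lemma~\ref{boundedcase} never uses boundedness of $x$. It only needs some spectral projection $e_{(N,\infty)}(|x|)$ to be nonzero, so you run that contradiction argument directly with $p_N$.

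\textbf{Comparison.} The paper's route is shorter once Lemma~\ref{boundedcase} is available, and it keeps the bounded and unbounded cases modular. Yours has several advantages:
\begin{enumerate}
\item It is self-contained. You work straight from \eqref{second-form} via $e_{(\lambda s,\infty)}(|\lambda x|)=e_{(s,\infty)}(|x|)$, with no appeal to monotonicity of the $F$-norm.
\item It gives the explicit threshold $\lambda<\min\{\delta,\gamma/2\}/N$.
\item It correctly phrases the conclusion as a $\liminf$; the paper writes $\lim$ before existence is established.
\end{enumerate}

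Your argument also shows that the two lemmas are really one statement. For every $x\in LS(\cM)$ and every $N>0$ with $e_{(N,\infty)}(|x|)\neq 0$, one has $\liminf_{\lambda\downarrow 0}\|\lambda x\|_{\mu,D}/\lambda\ge N$. Together with \eqref{lsmm}, this identifies the limit in $[0,\infty]$ in all cases.
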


\begin{proof}
Let $x\in LS(\cM)\setminus \cM$. Since $x\notin \cM,$ it follows that  $e_{(n,\infty)}(|x|)\neq0$ for all $n\ge1$. Thus, for any $n\ge1$ there exists a number $k_n>n$ such that
$e_{(n,k_n)}(|x|)\neq 0$. Then
$$
|x|\ge |x|e_{(n,k_n)}(|x|)
$$
for all $n\ge1$. Using monotonicity of the $F$-norm $\|\cdot\|_{\mu,D}$, we have
\begin{align*}
\frac{\|\lambda x\|_{\mu,D}}{\lambda} & =\frac{\|\lambda |x|\|_{\mu,D}}{\lambda}\ge
\frac{\|\lambda |x|e_{(n,k_n)}(|x|)\|_{\mu,D}}{\lambda}.
\end{align*}
Taking into account $|x|e_{(n,k_n)}(|x|)\in \cM$ and Lemma~\ref{boundedcase}, we have
\begin{align*}
\lim_{\lambda\downarrow 0}\frac{\|\lambda x\|_{\mu,D}}{\lambda} &  \ge \lim_{\lambda\downarrow 0}
\frac{\|\lambda |x|e_{(n,k_n)}(|x|)\|_{\mu,D}}{\lambda}=\||x|e_{(n,k_n)}(|x|)\|_{\cM}>n
\end{align*}
for all $n\ge1$.
Since the inequality holds for every $n\ge1$, letting
$n\to \infty$ implies  $\lim\limits_{\lambda\downarrow 0} \frac{\|\lambda x\|_{\mu,D}}{\lambda} = \infty$.
\end{proof}

\begin{prop}\label{main:prop}
Let $\cM$ and $\cN$ be von Neumann algebras and let $x\in \cM$ and $y\in LS(\cN)$. If
\[
\|\lambda x\|_{\mu_{\cM},D_{\cM}}=\|\lambda y\|_{\mu_\cN,D_{\cN}}
\]
for all $\lambda>0$, then $y\in \cN$ and $\|x\|_\cM=\|y\|_\cN$.
\end{prop}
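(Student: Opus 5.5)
The plan is to divide both sides of the hypothesis by $\lambda$ and let $\lambda\downarrow 0$, then use the dichotomy supplied by Lemmas~\ref{boundedcase} and~\ref{unboundedcase}.

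For every $\lambda>0$ the hypothesis gives
\[
\frac{\|\lambda x\|_{\mu_\cM,D_\cM}}{\lambda}=\frac{\|\lambda y\|_{\mu_\cN,D_\cN}}{\lambda}.
\]
Since $x\in\cM$, Lemma~\ref{boundedcase} applied in $\cM$ with the pair $(\mu_\cM,D_\cM)$ shows that the left-hand side converges to the finite number $\|x\|_\cM$ as $\lambda\downarrow0$. Hence the right-hand side also converges, and its limit is $\|x\|_\cM<\infty$.

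Next we show that $y\in\cN$. Suppose instead that $y\in LS(\cN)\setminus\cN$. Lemma~\ref{unboundedcase}, applied in $\cN$ with the pair $(\mu_\cN,D_\cN)$, would force the right-hand side to tend to $\infty$. This contradicts the finite limit just obtained, so $y\in\cN$.

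Now Lemma~\ref{boundedcase} applies to $y$ in $\cN$ and shows that the right-hand side converges to $\|y\|_\cN$. Uniqueness of limits then gives $\|x\|_\cM=\|y\|_\cN$.

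There is no real obstacle here. The only point to check is that both lemmas hold for an arbitrary von Neumann algebra with an arbitrary pair consisting of a probability measure and a dimension function, so they may be invoked separately in $\cM$ and in $\cN$. Their statements and proofs are indeed pair-independent.
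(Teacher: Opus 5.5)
Your proposal is correct and follows essentially the same argument as the paper. Both compute the limit of $\|\lambda x\|_{\mu_\cM,D_\cM}/\lambda$ via Lemma~\ref{boundedcase}, rule out $y\notin\cN$ with Lemma~\ref{unboundedcase}, and then apply Lemma~\ref{boundedcase} to $y$ to identify the limit as $\|y\|_\cN$.
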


\begin{proof} Since $x\in\cM$, Lemma \ref{boundedcase} gives
\[
\lim_{\lambda\downarrow 0}
\frac{\|\lambda x\|_{\mu_{\cM},D_{\cM}}}{\lambda}
=
\|x\|_{\cM}
<\infty.
\]
By assumption, we have 
\[
\|\lambda x\|_{\mu_{\cM},D_{\cM}}
=
\|\lambda y\|_{\mu_{\cN},D_{\cN}},
\quad \forall \lambda>0.
\]
Hence
\[
\lim_{\lambda\downarrow 0}
\frac{\|\lambda y\|_{\mu_{\cN},D_{\cN}}}{\lambda}
=
\|x\|_{\cM}
<\infty.
\]
By Lemma \ref{unboundedcase}, we have  $y\in\cN$.
Applying Lemma \ref{boundedcase} again, we obtain 
\[
\|y\|_{\cN}
=
\lim_{\lambda\downarrow 0}
\frac{\|\lambda y\|_{\mu_{\cN},D_{\cN}}}{\lambda}
=
\|x\|_{\cM},
\]
completing the proof.
\end{proof}

\begin{proof}[Proof of Theorem~\ref{isometry}] Let $\Phi:LS(\cM)\to LS(\cN)$ be an isometry and let $x\in \cM$.
Then, 
\[
\|\lambda x\|_{\mu_{\cM},D_{\cM}} =\|\lambda \Phi(x)\|_{\mu_{\cN},D_{\cN}}
\]
for all $\lambda>0$. By Proposition~\ref{main:prop}, we obtain that $\Phi(x)\in \cN$ and $\|x\|_\cM=\|\Phi(x)\|_\cN$. So, the restriction $\Phi|_{\cM}$ is an operator-norm isometry from
$\cM$ onto $\cN$. By Kadison's Theorem \cite[Theorem 7]{K51} (see also~\eqref{descriptionofKadison}), it is in the form
\[
\Phi(x)=wJ(x),\, x\in \cM,
\]
where $w$ is a unitary element in $\cN$ and $J:\cM \to \cN$ is a Jordan $^*$-isomorphism.
Then $J$ is also an $\left\|\cdot\right\|_{\mu_{\cM},D_{\cM}}$-$\left\|\cdot\right\|_{\mu_{\cN},D_{\cN}}$ isometry on $\cM$, in particular, is continuous in the local measure topology\footnote{Note that $^*$-isomorphisms between von Neumann algebras are known to be continuous with respect to the local measure topology\cite[Theorem 4.3.32]{BCLSZ}.}.
Since $\cM$ is dense in $LS(\cM)$ in the local measure topology \cite[Corollary 4.3.15]{BCLSZ}, $J$  can be extended uniquely as a Jordan $^*$-isomorphism
from $LS(\cM)$ onto $LS(\cN)$, which we still denote as $J$ and satisfies \eqref{gen-form}, completing the proof.
\end{proof}

\section{Equivalence between pairs of dimension functions and probability measures}
\label{s4}
Recall that the local measure $F$-norm, defined by
\begin{align*}
\|x\|_{\mu, D} = \inf_{0<\lambda<1}\max\left\{\lambda,\; \mu\bigl([D(e_{(\lambda,\infty)}(|x|))>\lambda]\bigr)\right\}
\end{align*}
depends on a dimension function $D$ and a probability measure $\mu$.

In this section, we show that the $F$-norm $\left\|\cdot\right\|_{\mu,D}$ is uniquely determined by the dimension function $D$ and the probability measure $\mu$ in each of the following cases: 
when the von Neumann algebra $\mathcal M$ is  of type II$_\infty$ or type III;
it is of type II$_1$ with an atomless center or it is a II$_1$-factor.
In contrast, the situation changes substantially in the presence of type $\mathrm{II}_1$ components or for type~I algebras with  atomic centers of dimension at least two. In these cases, the above uniqueness property no longer holds.

Let $\cM$ and $\cN$ be von Neumann algebras  and let $(\mu_\cM, D_\cM)$ and $(\mu_\cN, D_\cN)$ be the corresponding pairs consisting of a dimension function and a probability measure.

Let $Z(\cM)\cong L_{\infty}(\Omega,\Sigma,\mu_\cM)$ and $Z(\cN)\cong L_{\infty}(\Omega',\Sigma',\mu_\cN)$. Suppose that $\Phi:(LS(\cM),\left\|\cdot\right\|_{\mu_{\cM},D_{\cM}})\to(LS(\cN),\left\|\cdot\right\|_{\mu_{\cN},D_{\cN}})$ is a surjective isometry. By Theorem~\ref{isometry}, it is in the form $\Phi=w J.$
Then $J$ maps $Z(\cM)$ onto $Z(\cN)$ and $J|_{Z(\cM)}$ is an $^*$-isomorphism from $Z(\cM)$ onto $Z(\cN)$. Thus
$$
Z(\cM)\cong L_{\infty}(\Omega,\Sigma,\mu_\cM)\cong L_{\infty}(\Omega',\Sigma',\mu_\cN).
$$ 
Taking into account that a Jordan $^*$-isomorphism $J:\cM\to\cN$ is normal \cite[page 111]{Jordan isomorphism}, it follows that the mapping
\[
D_{\cN}\circ J\big|_{P(\cM)}:P(\cM) \to L^+(\Omega', \Sigma', \mu_\cN)
\]
is a dimension function on $\cM$.

\begin{lemma}\label{two-} Let $\mu_\cM, D_\cM, \mu_\cN, D_\cN, J$ be as above. Then  the pairs $\left(\mu_\cM, D_\cM\right)$ and 
$\left(\mu_\cN, D_\cN\circ J|_{P(\cM)}\right)$ are equivalent.  
\end{lemma}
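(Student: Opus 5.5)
We must show that $\|x\|_{\mu_\cM,D_\cM}=\|x\|_{\mu_\cN,D_\cN\circ J}$ for all $x\in LS(\cM)$. Here the second norm is computed on $LS(\cM)$ using the dimension function $D_\cN\circ J|_{P(\cM)}$, whose values lie in $L^+(\Omega',\Sigma',\mu_\cN)$; the center $Z(\cM)$ is identified with $L_\infty(\Omega',\Sigma',\mu_\cN)$ via $J|_{Z(\cM)}$. The plan is to show that, for each $\lambda\in(0,1)$, the two quantities
\[
\mu_\cM\bigl([D_\cM(e_{(\lambda,\infty)}(|x|))>\lambda]\bigr)\quad\text{and}\quad \mu_\cN\bigl([D_\cN(J(e_{(\lambda,\infty)}(|x|)))>\lambda]\bigr)
\]
coincide. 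Then the infima in \eqref{second-form} agree.

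The first step is to get rid of the unitary $w$. Since $\Phi(x)=wJ(x)$ with $w$ unitary in $\cN$, we have $|\Phi(x)|^2=J(x)^*w^*wJ(x)=|J(x)|^2$, so $|\Phi(x)|=|J(x)|$. The $F$-norm depends only on the spectral projections of $|\cdot|$, hence $\|J(x)\|_{\mu_\cN,D_\cN}=\|\Phi(x)\|_{\mu_\cN,D_\cN}=\|x\|_{\mu_\cM,D_\cM}$.

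The second step, which I expect to be the main obstacle, is to relate the spectral projections of $|J(x)|$ to $J$ applied to those of $|x|$. For self-adjoint $a$, a Jordan $^*$-isomorphism satisfies $J(f(a))=f(J(a))$ for Borel $f$. This holds for polynomials since $J(a^n)=J(a)^n$, extends to continuous functions, and then to Borel functions by normality of $J$ and its continuity on $LS$ in the local measure topology. Hence $J(e_{(\lambda,\infty)}(a))=e_{(\lambda,\infty)}(J(a))$. However, $|J(x)|\neq J(|x|)$ in general, because $J(x^*x)$ need not equal $J(x)^*J(x)$. To avoid this, I will use the decomposition $J=J_1+J_2$, where $J_1=J(\cdot)z$ is a $^*$-homomorphism and $J_2=J(\cdot)(\mathbf 1-z)$ is a $^*$-antihomomorphism for some central projection $z\in\cN$; this is Kadison's result, extended to $LS$ by density. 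On the $z$-part, $|J_1(x)|^2=J_1(|x|^2)$. On the $(\mathbf 1-z)$-part, $|J_2(x)|^2=J_2(|x^*|^2)$. The spectral projections of $|x^*|$ are equivalent (via the partial isometry of the polar decomposition) to those of $|x|$, up to the kernel piece; for $\lambda>0$, $e_{(\lambda,\infty)}(|x^*|)\sim e_{(\lambda,\infty)}(|x|)$. Since $J$ preserves Murray--von Neumann equivalence (apply $J$ to $v^*v$ and $vv^*$, using either $J(v)$ or $J(v)^*$ on each summand) and $D_\cN$ is additive over the central splitting by property (5), we get
\[
D_\cN\bigl(e_{(\lambda,\infty)}(|J(x)|)\bigr)=D_\cN\bigl(J(e_{(\lambda,\infty)}(|x|))\bigr).
\]

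The third step combines these. The displayed identity makes the norm of $J(x)$ with respect to $(\mu_\cN,D_\cN)$ equal to the norm of $x$ with respect to $(\mu_\cN,D_\cN\circ J|_{P(\cM)})$, because the spectral projections of $|x|$ at every level $\lambda$ yield identical level sets. Combined with the first step, this gives the equivalence of the pairs $(\mu_\cM,D_\cM)$ and $(\mu_\cN,D_\cN\circ J|_{P(\cM)})$.

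If handling the homomorphism/antihomomorphism split on unbounded operators proves delicate, I would first prove the identity for $x\in\cM$ and then extend to $LS(\cM)$ by density of $\cM$ in the local measure topology. This extension uses continuity of $\Phi$, of $J$, and of both $F$-norms, where the continuity of the second $F$-norm holds because it generates the same local measure topology.
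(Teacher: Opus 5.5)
Your argument is correct and is essentially the paper's proof. You remove $w$ via $|\Phi(x)|=|J(x)|$, split $J$ by a central projection into a $^*$-isomorphism and a $^*$-anti-isomorphism, and use $|J(x)|=J(|x|)$ resp.\ $J(|x^*|)$ on the two summands together with $e_{(\lambda,\infty)}(|x^*|)\sim e_{(\lambda,\infty)}(|x|)$. The paper packages that last step through the element $|zx|+|(\mathbf{1}-z)x^*|$, and it treats positive elements via elementary operators and density rather than Borel functional calculus, but the content is the same.

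One correction concerns the ``plan'' in your first paragraph, i.e.\ equality for every $\lambda$ of $\mu_\cM\bigl([D_\cM(e_{(\lambda,\infty)}(|x|))>\lambda]\bigr)$ and $\mu_\cN\bigl([D_\cN(J(e_{(\lambda,\infty)}(|x|)))>\lambda]\bigr)$. This is false in general: Example~1 of Section~\ref{s6} with $J=\mathrm{Id}$, $x=z_{\{i\}}$ for an $i$ with $m_i\neq m_i'$, and $\lambda<d_i$ violates it. It is also not what your Steps 1--3 prove. The level-by-level identity you actually establish compares $D_\cN(e_{(\lambda,\infty)}(|J(x)|))$ with $D_\cN(J(e_{(\lambda,\infty)}(|x|)))$. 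The link to $(\mu_\cM,D_\cM)$ holds only at the level of the norms, through the isometry, so that sentence should be rewritten accordingly.
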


\begin{proof}
Since $J(\cdot)=w^*\Phi(\cdot)$ is also an isometry, we have 
\[
\|x\|_{\mu_\cM, D_\cM}=\|J(x)\|_{\mu_\cN, D_\cN}
\]
for all $x\in LS(\cM)$.  Therefore, it suffices to show that
\begin{align}\label{muN}
\|x\|_{\mu_\cN, D_\cN\circ J|_{P(\cM)}}=\|J(x)\|_{\mu_\cN, D_\cN}
\end{align}
for all $x\in LS(\cM)$. Since $\cM$ is dense in $LS(\cM)$ in the local measure topology \cite[Corollary 4.3.15]{BCLSZ}, it suffices to consider  $x\in \cM$.

Let us first consider the case when $x\in \cM$ is a positive elementary operator, that is, 
\begin{align}\label{lp_n}
x=\lambda_1 p_1+\ldots +\lambda_n p_n,
\end{align}
where $\lambda_1, \ldots, \lambda_n>0$ and $p_1, \ldots, p_n$ are mutually orthogonal projections.
Since $J$ is a Jordan $^*$-isomorphism, we have
$$
J(e_{(\lambda, \infty)}(x))=J(\sum\limits_{\lambda_i>\lambda} p_i)=e_{(\lambda, \infty)}(J(x))
$$ for all $\lambda>0$.
Thus, 
\begin{align*}
(D_\cN\circ J)(e_{(\lambda, \infty)}(x))=D_\cN(J(e_{(\lambda, \infty)}(x)))=D_\cN(e_{(\lambda, \infty)}(J(x))),    
\end{align*}
and hence,
\begin{align*}
\mu_\cN\left([(D_\cN\circ J)(e_{(\lambda, \infty)}(x))>\lambda]\right)=\mu_\cN\left([D_\cN(e_{(\lambda, \infty)}(J(x)))>\lambda]\right)    
\end{align*}
for all $\lambda>0$. Thus, by \eqref{second-form}, the last equality implies \eqref{muN} for all positive elements of the form \eqref{lp_n}. 
Since the set of all elements of the form \eqref{lp_n} is dense in the local measure topology in $\cM_+=\left\{x\in \cM: x \ge 0\right\}$ and
$J$ is an isometry, it follows that  \eqref{second-form} holds for all $x\in \cM_+$, and hence for all positive elements from $LS(\cM)$ (see \cite[Corollary 4.3.15]{BCLSZ}).

Now,  take an arbitrary $x\in LS(\cM)$. 
We begin by considering the special case where $J$ is either a $^*$-isomorphism or a $^*$-anti-isomorphism.

Assume  $J$ is an $^*$-isomorphism. We have
\begin{align*}
|J(x)|^2=J(x)^*J(x)=J(x^*x)=J(|x|^2)=J(|x|)^2,
\end{align*}
and hence, 
\begin{align}\label{J(x)}
|J(x)|=J(|x|).
\end{align}
Taking into account \cite[Proposition 4.3.13 (\romannumeral1)]{BCLSZ}, we have
\begin{align*}
\|x\|_{\mu_\cN, D_\cN\circ J|_{P(\cM)}} 
&  = \||x|\|_{\mu_\cN, D_\cN\circ J|_{P(\cM)}}
=\|J(|x|)\|_{\mu_\cN, D_\cN}\\
& \stackrel{\eqref{J(x)}}{=} \||J(x)|\|_{\mu_\cN, D_\cN}
=\|J(x)\|_{\mu_\cN, D_\cN}.
\end{align*}

Now assume  $J$ is an $^*$-anti-isomorphism. Following the same reasoning as in the proof of \eqref{J(x)}, we have 
\begin{align}\label{J*}
|J(x)|=J(|x^*|),
\end{align}
and
hence, we have
\begin{align*}
\|x\|_{\mu_\cN, D_\cN\circ J|_{P(\cM)}} &  = \||x^*|\|_{\mu_\cN, D_\cN\circ J|_{P(\cM)}}=\|J(|x^*|)\|_{\mu_\cN, D_\cN}\\
& \stackrel{\eqref{J*}}{=} \||J(x)|\|_{\mu_\cN, D_\cN}=\|J(x)\|_{\mu_\cN, D_\cN}.
\end{align*}

 The following property will be needed in the sequel. Let $z\in \cM$ be a central projection and let $x\in LS(\cM)$. Then
\begin{align}\label{x+}
\|x\|_{\mu_{\cN},D_{\cN}\circ J|_{P(\cM)}}=\||zx|+|(\mathbf{1}-z)x^*|\|_{\mu_{\cN},D_{\cN}\circ J|_{P(\cM)}},    
\end{align}
which follows from the following relations
\begin{align*}
e_{(\lambda, \infty)}(|zx|+|(\mathbf{1}-z)x^*|) & = ze_{(\lambda, \infty)}(|x|)+(\mathbf{1}-z)e_{(\lambda, \infty)}(|x^*|) \\
 & \sim ze_{(\lambda, \infty)}(|x|)+(\mathbf{1}-z)e_{(\lambda, \infty)}(|x|)=e_{(\lambda, \infty)}(|x|),
\end{align*}
here we have used that $e_{(\lambda, \infty)}(|x^*|)\sim e_{(\lambda, \infty)}(|x|)$ (see \cite[Proposition 2.1.7]{BCLSZ}) for all $\lambda>0$.

Next, we consider a general case.

By \cite[Proposition 3.1]{BHS24}, there exists a central projection $z'\in\cN$ such that $J(\cdot)z'$ is a $^*$-isomorphism and $J(\cdot)(\mathbf{1}-z')$ is a $^*$-anti-isomorphism. 
Since $J$ is bijective, there exists a central projection $z\in\cM$ such that $J(z)=z'$.
By \eqref{J(x)}, \eqref{J*}, we have
\begin{align}\label{JxJ*}
J(|zx|)=|J(zx)|,\,\, J(|(\mathbf{1}-z)x^*|)=|J((\mathbf{1}-z)x)|.
\end{align}
Using that Jordan $^*$-isomorphism maps central orthogonal summands to central orthogonal summands, and the above cases, we obtain 
\begin{eqnarray*}
\|x\|_{\mu_\cN, D_\cN\circ J|_{P(\cM)}} &  \stackrel{\eqref{x+}}{=} &\||zx|+|(\mathbf{1}-z)x^*|\|_{\mu_\cN, D_\cN\circ J|_{P(\cM)}}\\
& =&\|J(|zx|+|(\mathbf{1}-z)x^*|)\|_{\mu_\cN, D_\cN}\\
&=&\| J(|zx|)+J(|(\mathbf{1}-z)x^*|) \|_{\mu_\cN, D_\cN} \\
& \stackrel{\eqref{JxJ*}}{=}&\| |J(zx)|+|J((\mathbf{1}-z)x)| \|_{\mu_\cN, D_\cN}\\
&  =&\| |J(x)| \|_{\mu_\cN, D_\cN}=\| J(x) \|_{\mu_\cN, D_\cN},
\end{eqnarray*}
completing the proof.
\end{proof}

Now we are in a position to present the proof of Theorem~\ref{local-isometry}.

\begin{proof}[Proof of Theorem~\ref{local-isometry}] It suffices to show implication (1)$\Rightarrow$(2), because the converse implication is trivial. By Theorem~\ref{isometry}, $\Phi$ is in the form~\eqref{gen-form}, that is,
\[
\Phi(x)=wJ(x),\, x\in LS(\cM).
\]
Since $J(\cdot)=w^* \Phi(\cdot)$ is also an isometry, by Lemma~\ref{two-}, the pairs $(\mu_\cM, D_\cM)$ and $\left(\mu_\mathcal{N}, D_\mathcal{N}\circ J|_{P(\cM)}\right)$ are equivalent, completing the proof.  
\end{proof}

Let $(\mu_1, D_1)$ and $(\mu_2,D_2)$ be equivalent pairs associated with a von Neumann algebra $\cM$, that is,  
$(\Omega_i, \Sigma_i, \mu_i)$, $i=1,2$ are two probability measure spaces such that 
$Z(\cM)\cong L_\infty(\Omega_i, \Sigma_i, \mu_i)$, $i=1,2$ and  
$D_i: P(\cM)\to L^+(\Omega_i, \Sigma_i, \mu_i)$, $i=1,2$  are   two dimension functions on $P(\cM)$ such that 
\[
\left\|\cdot\right\|_{\mu_1,D_1}=\left\|\cdot\right\|_{\mu_2,D_2}.
\]

Let $\psi:L_{\infty}(\Omega_1,\Sigma_1,\mu_1)\to L_{\infty}(\Omega_2,\Sigma_2,\mu_2)$ be a $^*$-isomorphism. Define a probability measure $\mu_2\circ\psi$ on $(\Omega_1,\Sigma_1)$ by
\[
(\mu_2\circ\psi)(A)
   =
\int_{\Omega_2}\psi(\chi_A)(\omega)\,d\mu_2(\omega),
\qquad A\in\Sigma_1 .
\]
By construction, $\psi:L_\infty(\Omega_1,\Sigma_1,\mu_2\circ\psi)\to
L_\infty(\Omega_2,\Sigma_2,\mu_2)$
is a measure-preserving $^*$-isomorphism.

Define
\[
\widetilde{\psi}:
L^{+}(\Omega_1,\Sigma_1,\mu_2\circ\psi)
\to
L^{+}(\Omega_2,\Sigma_2,\mu_2)
\]
by
\[
\widetilde{\psi}(f)
=
\sup_{n\in\mathbb N}\psi(\min\{f, n\}).
\qquad
f\in L^{+}(\Omega_1,\Sigma_1,\mu_2\circ\psi),
\]
Then $\widetilde{\psi}$ extends $\psi$ from the positive cone of
$L_\infty(\Omega_1,\Sigma_1,\mu_2\circ\psi)$ to an order isomorphism
\[
L^{+}(\Omega_1,\Sigma_1,\mu_2\circ\psi)
\to
L^{+}(\Omega_2,\Sigma_2,\mu_2).
\]
Moreover, its inverse is obtained analogously from $\psi^{-1}$.

Since $\widetilde{\psi}$ is an order isomorphism, the mapping
\[
p\in P(\mathcal M)
\longmapsto
(\widetilde{\psi}^{-1}\circ D_2)(p)
\in
L^{+}(\Omega_1,\Sigma_1,\mu_2\circ\psi)
\]
defines a dimension function. Furthermore, since
\[
\psi:L_\infty(\Omega_1,\Sigma_1,\mu_2\circ\psi)
\to
L_\infty(\Omega_2,\Sigma_2,\mu_2)
\]
is measure-preserving, it follows immediately from \eqref{second-form} that the pairs $(\mu_2,D_2)$ and $\bigl(\mu_2\circ\psi,\widetilde{\psi}^{-1}\circ D_2\bigr)$
are equivalent. Consequently, $(\mu_1,D_1)$ and $\bigl(\mu_2\circ\psi,\widetilde{\psi}^{-1}\circ D_2\bigr)$
are also equivalent.

Therefore, the study of equivalence of pairs of probability measures and dimension functions reduced to the case when both measures defined on same 
domain.

We start with the following result.

\begin{lemma}\label{delta+}
Let $\cM$ be a von Neumann algebra, and let $p\in\mathcal M$ be a nonzero projection such that
\[
D(p)(\omega) \ge \mu([D(p)>0])
\]
for almost all $\omega\in[D(p)>0]$.
Then
\begin{align}\label{pmu}
\|p\|_{\mu,D}
=
\mu([D(p)>0]).
\end{align}
\end{lemma}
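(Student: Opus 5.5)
We compute $\|p\|_{\mu,D}$ directly from the projection formula \eqref{p-norm}, writing $\gamma:=\mu([D(p)>0])$. Since $p\neq 0$, property (1) of dimension functions gives $\gamma>0$, and $\gamma\le 1$ because $\mu$ is a probability measure. Set
\[
F(\lambda)=\max\{\lambda,\mu([D(p)>\lambda])\},\qquad 0<\lambda<1 .
\]
The proof consists of an upper bound and a lower bound for $\inf_{0<\lambda<1}F(\lambda)$, obtained by splitting into $\lambda<\gamma$ and $\lambda\ge\gamma$.

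\emph{Lower bound.} Take $0<\lambda<\gamma$. By hypothesis, $D(p)(\omega)\ge\gamma>\lambda$ for almost every $\omega\in[D(p)>0]$. Hence $[D(p)>\lambda]$ coincides with $[D(p)>0]$ up to a null set, and the reverse inclusion is trivial. So $\mu([D(p)>\lambda])=\gamma$ and $F(\lambda)=\gamma$. For $\lambda\ge\gamma$ we have $F(\lambda)\ge\lambda\ge\gamma$ immediately. Therefore $F(\lambda)\ge\gamma$ for every $\lambda\in(0,1)$, which gives $\|p\|_{\mu,D}\ge\gamma$.

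\emph{Upper bound.} If $\gamma<1$, any $\lambda\in(0,\gamma)$ yields $F(\lambda)=\gamma$, so the infimum is at most $\gamma$. The range $(0,\gamma)$ is nonempty in every case, so the same argument also covers $\gamma=1$.

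Combining the two bounds gives $\|p\|_{\mu,D}=\gamma$, which is \eqref{pmu}. The only point needing care is the almost-everywhere identification $[D(p)>\lambda]=[D(p)>0]$ for $\lambda<\gamma$. This uses the hypothesis with the \emph{strict} inequality $\lambda<\gamma$, which is why we work on $(0,\gamma)$ rather than at $\lambda=\gamma$. Values $D(p)=\infty$ cause no problem, since they lie in both sets.
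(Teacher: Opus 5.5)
Your proof is correct and follows the paper's argument: the paper also splits $(0,1)$ into $\lambda<\alpha$, where $[D(p)>\lambda]=[D(p)>0]$ gives the value $\alpha$, and $\alpha\le\lambda<1$, where the maximum is at least $\lambda\ge\alpha$. Your explicit remarks that $\gamma>0$ (so the first range is nonempty and the infimum is actually attained there) and that the set identification holds only up to null sets are small points the paper leaves implicit.
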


\begin{proof}
By assumptions,
\[
D(p)(\omega) \ge\mu([D(p)>0])=:\alpha
\quad \text{on } [D(p)>0],
\]
and $D(p)$ vanishes outside $[D(p)>0]$. 
Hence, $[D(p)>\lambda]=[D(p)>0]$ for all $0<\lambda<\alpha$.

If $0<\lambda<\alpha$, then
\[
\max\{\lambda,\mu([D(p)>\lambda])\}
=\max\{\lambda,\mu([D(p)>0])\}=
\max\{\lambda,\alpha\}
=
\alpha.
\]

If $\alpha\le \lambda< 1$, then
\[
\max\{\lambda,\mu([D(p)>\lambda])\}\ge \lambda
\ge\alpha.
\]

Since $\alpha=\mu([D(p)>0])$, we conclude that
$$
\|p\|_{\mu,D}\stackrel{(\ref{p-norm})}{=}\inf_{0<\lambda<1}\max\{\lambda,\mu([D(p)>\lambda])\}=\mu([D(p)>0]),
$$
which completes the proof.
\end{proof}

\begin{lemma}\label{properly central}Let $\cM$ be a properly infinite von Neumann algebra and let $z\in\cM$ be  a central projection.
Then
\begin{align}\label{zmu}
\|z\|_{\mu,D} =\mu([D(z)>0]).
\end{align}
In particular, if $(\mu, D)$ and $(\mu',D')$ are equivalent, then $\mu=\mu'$.
\end{lemma}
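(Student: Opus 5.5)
Since $\cM$ is properly infinite, every nonzero central projection $z$ is properly infinite, because $z\cM$ is a properly infinite von Neumann algebra with unit $z$. By the property of dimension functions quoted in the preliminaries (\cite[Theorem 1.11.3]{BCLSZ}), this gives
\[
D(z)=\infty\cdot\varphi(c(z))=\infty\cdot\varphi(z).
\]
Hence $D(z)$ takes the value $\infty$ on $A=[D(z)>0]$ and $0$ off $A$, where $z=z_A$ in the notation of \eqref{zA}. In particular $D(z)(\omega)=\infty\ge\mu([D(z)>0])$ for almost every $\omega\in[D(z)>0]$. So Lemma~\ref{delta+} applies directly and gives \eqref{zmu}, that is, $\|z\|_{\mu,D}=\mu([D(z)>0])=\mu(A)$. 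If $z=0$, both sides vanish trivially.

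For the second assertion, we follow the reduction made before Lemma~\ref{delta+} and assume that $\mu$ and $\mu'$ live on the same $(\Omega,\Sigma)$, with the same identification $\varphi$ of $Z(\cM)$. Take any $A\in\Sigma$ and set $z=z_A=\varphi^{-1}(\chi_A)$. Then $D(z)=\infty\cdot\chi_A$ and $D'(z)=\infty\cdot\chi_A$, so $[D(z)>0]=[D'(z)>0]=A$ up to null sets. Equivalence of the pairs together with \eqref{zmu} applied twice then yields
\[
\mu(A)=\|z\|_{\mu,D}=\|z\|_{\mu',D'}=\mu'(A).
\]
Since $A$ is arbitrary, $\mu=\mu'$.

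The only delicate point is the identification of the sets $[D(z)>0]$ for the two dimension functions. This needs $\varphi$ to be the same for both, which is exactly what the preceding reduction (transporting $\mu_2$ via $\psi$) guarantees. Apart from that, everything follows from Lemma~\ref{delta+} and the formula $D(p)=\infty\cdot\varphi(c(p))$ for properly infinite $p$.
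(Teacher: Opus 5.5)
Your proof is correct and follows the paper's argument. You show that $D(z)$ equals $+\infty$ on its support, so Lemma~\ref{delta+} gives $\|z\|_{\mu,D}=\mu([D(z)>0])$, and you then apply this to $z_A$ for both pairs to get $\mu(A)=\mu'(A)$; your appeal to $D(p)=\infty\cdot\varphi(c(p))$ for properly infinite $p$, rather than the paper's appeal to property (2) alone, and your remark that both pairs must use the same identification $\varphi$ of the center are slightly more careful than the paper's write-up.
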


\begin{proof}
Let  $z$ be a nonzero  central projection. Since $\cM$ is properly infinite, it follows that $z$ is properly infinite. By property~\eqref{D-finite} of the dimension function, we have
\[
D(z)(\omega)=+\infty>\mu([D(z)>0]) \quad \text{on } [D(z)>0],
\]
and $D(z)$ vanishes outside of $[D(z)>0]$.  Hence, $z$ satisfies the condition of Lemma~\ref{delta+}. Therefore,
\begin{align*}
\|z\|_{\mu,D} & =
\mu([D(z)>0]).
\end{align*}

Assume  $(D, \mu)$ and $(D',\mu')$ are equivalent. For $A\in \Sigma$, we have 

$$
D(z_A)=D(z_A\mathbf{1})\stackrel{\eqref{D-finite5}}{=}\chi_A D(\mathbf{1}),
$$ 
and therefore 
\begin{align}\label{mu(A)}
[D(z_A)>0]=A.    
\end{align}

Hence, 
\begin{align*}
\mu(A) \stackrel{\eqref{mu(A)}}{=}\mu([D(z_A)>0]) \stackrel{\eqref{zmu}}{=}  \|z_A\|_{\mu,D}=\|z_A\|_{\mu', D'}\stackrel{\eqref{zmu}}{=}\mu'([D'(z_A)>0])=\mu'(A),
\end{align*}
completing the proof.
\end{proof}

Now we are able to prove the following result.

\begin{prop}\label{III}
Let $\cM$ be of type III. If $(\mu, D)$ and $(\mu',D')$ are equivalent, then they coincide.
\end{prop}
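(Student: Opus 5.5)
Since $\cM$ is of type III, it is properly infinite. Lemma~\ref{properly central} therefore applies directly and gives $\mu=\mu'$, as measures on the common $(\Omega,\Sigma)$; the reduction above lets us assume the domains coincide. It remains to show $D=D'$.

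For a type III algebra, formula \eqref{dim-fc} with $z=0$ says that every dimension function has the form
\[
D(p)=\infty\cdot\varphi(c(p)),\qquad p\in P(\cM).
\]
So $D$ is determined completely by the isomorphism $\varphi:Z(\cM)\to L_\infty(\Omega,\Sigma,\mu)$. Likewise, $D'(p)=\infty\cdot\varphi'(c(p))$ for the isomorphism $\varphi'$ used for $D'$. The task therefore reduces to showing $\varphi(z)=\varphi'(z)$ for every central projection $z$.

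If the two pairs are built with the same identification $\varphi$ of the center (which the reduction performed above effectively arranges), then $D(p)=D'(p)$ for all $p$ at once, and we are done.

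If the identifications might differ, I would use the norm on central projections. Lemma~\ref{properly central} gives
\[
\|z\|_{\mu,D}=\mu\bigl(\operatorname{supp}\varphi(z)\bigr),
\]
and the analogous formula for the primed pair. Equivalence of the norms then yields
\[
\mu(\operatorname{supp}\varphi(z))=\mu(\operatorname{supp}\varphi'(z))\quad\text{for all } z.
\]
This alone does not force $\varphi=\varphi'$, because a measure-preserving automorphism of the center could intervene. For that reason I expect the main subtlety to be pinning down the convention. In the paper's setup both $D$ and $D'$ take values in functions on $\Omega$ via the fixed isomorphism $\varphi$ (property~(5) of the definition refers to the same $\varphi$). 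Under that convention, $D(z_A)=D'(z_A)=\infty\cdot\chi_A$ for all $A\in\Sigma$, and the displayed formula for a general $p$ gives $D=D'$.
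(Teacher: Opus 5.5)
Your proof is correct and follows the paper's argument. Lemma~\ref{properly central} gives $\mu=\mu'$, and $D=D'$ follows because on a type III algebra every dimension function equals $\infty\cdot\varphi(c(p))$. The paper cites this uniqueness as \cite[Corollary 1.11.4]{BCLSZ}, and you read it off from \eqref{dim-fc} with $z=0$; your convention that both pairs use the same identification $\varphi$ of the center is the one Lemma~\ref{properly central} implicitly uses.
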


\begin{proof} By Lemma~\ref{properly central}, we have $\mu=\mu'$.
By \cite[Corollary 1.11.4]{BCLSZ}, dimension functions on type III algebras are unique, and hence,  $D=D'$.
\end{proof}

\subsection{The type  $II$ case}

Let $\mathcal{M}$ be a semifinite von Neumann algebra  with center $Z(\mathcal{M})\cong L_\infty(\Omega,\Sigma,\mu)$.

Let $T$ be a faithful normal semifinite extended center-valued trace on the algebra $\cM$\cite[Theorem 2.34]{Tak1}.
By \cite[Theorem 1.11.13]{BCLSZ}, every dimension function $D$ on $P(\mathcal{M})$ has the form
\begin{align}\label{Dpc}
D(p)=c\,T(p),\,\, p\in P(\cM),
\end{align}
where $c:\Omega\to(0,\infty)$ is a strictly positive measurable function.

\begin{lemma}\label{equiv}
Let $\cM$ be of type II. If $(\mu,D)$ and $(\mu,D')$ are equivalent, then they coincide.
\end{lemma}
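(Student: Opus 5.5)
The plan is to reduce everything to comparing the $F$-norms of suitably chosen projections, using the explicit form \eqref{Dpc}. Fix a faithful normal semifinite extended center-valued trace $T$ on $\cM$ and write $D=c\,T$ and $D'=c'\,T$ with strictly positive measurable $c,c'$. Since the measures already agree, proving $D=D'$ amounts to proving $c=c'$ almost everywhere. Suppose not. By the symmetry between $(\mu,D)$ and $(\mu,D')$ we may assume that $\{c'\ge (1+\varepsilon)c\}$ has positive measure for some $\varepsilon>0$.

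First I compute the norm of a projection whose dimension is a multiple of an indicator. If $D(p)=t\chi_A$ with $0<t<1$ and $\mu(A)>0$, then $[D(p)>\lambda]$ equals $A$ for $\lambda<t$ and is empty for $\lambda\ge t$. Hence \eqref{p-norm} gives
\[
\|p\|_{\mu,D}=\min\{t,\mu(A)\}.
\]
If instead $D'(p)\ge s\chi_A$ and $D'(p)$ vanishes off $A$, then the same argument gives
\[
\|p\|_{\mu,D'}\ge \min\{s,\mu(A)\}.
\]

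Next I choose the set and the projection. Since $\cM$ is type II, hence semifinite, it has a faithful finite projection $q$, and $T(q)$ is finite and strictly positive a.e. Choose $\eta>0$ such that
\[
A=\{c'\ge(1+\varepsilon)c\}\cap\{c\ge\eta\}\cap\{T(q)\ge\eta\}
\]
has positive measure. Then fix $t$ with
\[
0<t<\min\{\eta^2,\ \mu(A)/(1+\varepsilon)\}.
\]
The function $f=(t/c)\chi_A$ is central, and on $A$ we have $f\le t/\eta\le \eta\le T(q)$, so $0\le f\le T(qz_A)$. I will use the standard halving/divisibility property of type II algebras (see \cite{BCLSZ,Tak1}): for a finite projection $e$ and a central function $0\le f\le T(e)$ there is a subprojection $p\le e$ with $T(p)=f$. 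Applying it to $e=qz_A$ yields $p\le qz_A$ with $T(p)=(t/c)\chi_A$.

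With this $p$ we get $D(p)=t\chi_A$ and $D'(p)=t(c'/c)\chi_A\ge t(1+\varepsilon)\chi_A$, with $D'(p)$ vanishing off $A$. The computations above then give
\[
\|p\|_{\mu,D}=t
\quad\text{and}\quad
\|p\|_{\mu,D'}\ge\min\{t(1+\varepsilon),\mu(A)\}=t(1+\varepsilon)>t.
\]
This contradicts the equivalence of the two pairs, so $c=c'$ and $D=D'$.

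The only nontrivial ingredient is the existence of a subprojection with prescribed center-valued trace. This is exactly where the type II hypothesis enters; in type I the values of $T$ are integer-valued, and the argument breaks down. If a direct reference is unavailable, I would construct $p$ by repeated halving of $qz_A$ into equivalent pieces, using the dyadic expansion of $f/T(qz_A)$ and the normality of $T$ to pass to the limit.
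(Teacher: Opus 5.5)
Your argument is correct and is essentially the paper's proof. Both produce a projection whose $D$-dimension is a small constant $t$ times the indicator of a set on which $c'/c$ is bounded away from $1$, with $t$ and $t(1+\varepsilon)$ below that set's measure, and then compare the two $F$-norms via \eqref{p-norm}. The only difference is that you obtain this projection in one step from the divisibility property with target $(t/c)\chi_A$, whereas the paper first takes $p\le z_Ae$ with $T(p)=\lambda_0\chi_A$ and then reduces it to trace $\lambda_0 2^{-n}\chi_B$ with the Halving Lemma.
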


\begin{proof} 
Assume that $D\neq D'$. Then there exists a finite projection $e\in \cM$ such that $D(e)\neq D'(e)$. Without loss of generality, we may assume that the subset $[D'(e)>D(e)]$ has a positive $\mu$-measure. 
 By \eqref{dim-fc} there exists  a faithful normal semifinite extended center-valued trace $T$ on $\cM$ such that
$D=T|_{P(\cM)}$.
By \eqref{Dpc} we can find  a strictly positive measurable function
$c':\Omega\to(0,\infty)$ such that $D'=c'T|_{P(\cM)}=c'D$. 
Since $[D'(e)>T(e)=D(e)]$ has a positive $\mu$-measure, it follows that $[c'>1]$ also has a positive $\mu$-measure. We can find  a measurable subset $A\subset [D'(e)>T(e)]$ of positive measure and $\lambda_0>0$ such that $A\subset [T(e)>\lambda_0]$. 
Since 
\[
T(z_Ae)= \chi_A T(e)\ge \lambda_0\chi_A, 
\]
it follows from   \cite[Theorem 5.6.2]{SS08} that  there exists a subprojection $p$ of $z_A e$ such that
\begin{align}\label{Tpl}
T(p)=\lambda_0\chi_A,
\end{align}
in particular, the central support $c(p)$ of $p$ is $c(p)=z_A$. 
We can find a measurable subset $B\subset A$ with a positive measure $r$ and $\varepsilon>0$ such that 
\begin{align}\label{Bsub}
B\subset [D'(p)=c'T(p)>T(p)+\varepsilon].
\end{align}
Let $n$ be a positive integer such that $\frac{\lambda_0+\varepsilon}{2^n}<r=\mu(B)$. 
Since $\cM$ is of type II, using the Halving Lemma \cite[Lemma 6.5.6]{KR2}, we can find 
mutually orthogonal and mutually equivalent projections $q_1, \ldots, q_{2^n}$ such that 
$\sum\limits_{i=1}^{2^n}q_i=z_B p$.
We have
\[
\lambda_0\chi_B = \lambda_0\chi_A \chi_B \stackrel{\eqref{Tpl}}{=} \chi_B T(z_A p)=T(z_B p)=\sum\limits_{i=1}^{2^n}T(q_i)=\sum\limits_{i=1}^{2^n}T(q_1)=2^n T(q),
\]
where $q=q_1$. Thus 
\[
2^nD'(q)=c' 2^nT(q) = c'T(z_Bp)=c'T(p)\chi_B\stackrel{\eqref{Bsub}}{\ge} T(p)\chi_B+\varepsilon\chi_B=(\lambda_0+\varepsilon)\chi_B.
\]

Let us compute $\|q\|_{\mu,D}$ and $\|q\|_{\mu,D'}$.

For $\frac{\lambda_0}{2^n}\le \lambda <1$, the set $[D(q)>\lambda]$ is empty, because $D(q)=\frac{\lambda_0}{2^n}\chi_B$. Thus
\[
\inf\limits_{\frac{\lambda_0}{2^n}\le \lambda <1}
\max\left\{\lambda, \mu\left([D(q)>\lambda]\right)\right\}
=
\inf\limits_{\frac{\lambda_0}{2^n}\le \lambda <1}
\max\left\{\lambda, 0\right\}
=
\frac{\lambda_0}{2^n}.
\]

For $0<\lambda<\frac{\lambda_0}{2^n}$, we have $[D(q)>\lambda]=B$, and hence
$$
\inf\limits_{0<\lambda<\frac{\lambda_0}{2^n}}
\max\left\{\lambda, \mu\left([D(q)>\lambda]\right)\right\}
=
\inf\limits_{0<\lambda<\frac{\lambda_0}{2^n}}
\max\{\lambda, r\}
=r.
$$
Hence, by \eqref{p-norm}, we have
\[
\|q\|_{\mu,D}= \frac{\lambda_0}{2^n}.
\]

By a similar argument, we obtain
\[
\inf\limits_{\frac{\lambda_0+\varepsilon}{2^n}\le \lambda <1}
\max\left\{\lambda, \mu\left([D'(q)>\lambda]\right)\right\}
\ge \frac{\lambda_0+\varepsilon}{2^n}
\]
and
\[
\inf\limits_{0<\lambda<\frac{\lambda_0+\varepsilon}{2^n}}
\max\left\{\lambda, \mu\left([D'(q)>\lambda]\right)\right\}
=
\inf\limits_{0<\lambda<\frac{\lambda_0+\varepsilon}{2^n}}
\max\left\{\lambda, r\right\}
>
\frac{\lambda_0+\varepsilon}{2^n}.
\]
Thus,
\[
\|q\|_{\mu,D'}\ge  \frac{\lambda_0+\varepsilon}{2^n}.
\]
Therefore, $\|q\|_{\mu,D}\neq \|q\|_{\mu, D'}$. From this contradiction we obtain that $D'=D$.
\end{proof}

Lemmas~\ref{properly central}, \ref{equiv} imply the following result.

\begin{prop}\label{II-infty}
Let $\cM$ be of type II$_\infty$. If $(\mu,D)$ and $(\mu',D')$ are equivalent, then they coincide.
\end{prop}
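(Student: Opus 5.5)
The plan is to combine Lemma~\ref{properly central} and Lemma~\ref{equiv}, after first reducing to a common measure space as described earlier in Section~\ref{s4}.

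By the discussion preceding Lemma~\ref{delta+}, we may transport $(\mu',D')$ along the $^*$-isomorphism of the centers. Thus we may assume that both $\mu$ and $\mu'$ are probability measures on the same $(\Omega,\Sigma)$, with $Z(\cM)\cong L_\infty(\Omega,\Sigma,\mu)\cong L_\infty(\Omega,\Sigma,\mu')$ via the same $\varphi$.

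\textbf{Step 1: $\mu=\mu'$.} A type II$_\infty$ algebra is properly infinite, so Lemma~\ref{properly central} applies directly. For every central projection $z_A$ it gives
\[
\mu(A)=\|z_A\|_{\mu,D}=\|z_A\|_{\mu',D'}=\mu'(A).
\]
Hence $\mu=\mu'$.

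\textbf{Step 2: $D=D'$.} The pairs $(\mu,D)$ and $(\mu,D')$ are now equivalent pairs with the same measure on a type II algebra. Lemma~\ref{equiv} then yields $D=D'$.

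\textbf{Where care is needed.} The only point to check is that Lemma~\ref{equiv} does not use finiteness of $\cM$. Its proof works with a finite projection $e$ satisfying $D(e)\neq D'(e)$. Such an $e$ exists in the II$_\infty$ case: $D$ and $D'$ are both multiples of a trace $T$, namely $D=cT$ and $D'=c'T$. If $c\neq c'$ on a set of positive measure, we can use a finite projection with faithful central support there, which exists by semifiniteness. The remaining steps (cutting by $z_A$, the Halving Lemma, and the norm computation for $q$) are purely local and hold in any type II algebra.

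The Step 1 reduction to a common measure, together with the observation that properly infinite central projections have $D=\infty$ on their support, is exactly what Lemma~\ref{properly central} already encodes. I therefore expect no real obstacle; the proof is a two-line combination of these lemmas.
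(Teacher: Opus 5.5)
Your proposal is correct and matches the paper, which derives this proposition in one line from Lemma~\ref{properly central} (a type II$_\infty$ algebra is properly infinite, so $\mu=\mu'$) and Lemma~\ref{equiv} (with a common measure on a type II algebra, equivalent pairs have $D=D'$). Your extra check, using semifiniteness to get a finite projection $e$ with $D(e)\neq D'(e)$, fills in a step the paper's proof of Lemma~\ref{equiv} takes for granted.
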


\subsection{The case when the center is  atomless}

\begin{lem}\label{mu=mu}
Let $\cM$ be a von Neumann algebra with atomless center $Z(\cM)$. If $(\mu,D)$ and $(\mu',D')$ are equivalent, then $\mu=\mu'$.
\end{lem}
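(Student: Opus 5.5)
The plan is to recover $\mu$ directly from the $F$-norm on small central projections, using the atomlessness of the center to cut any central projection into pieces small enough that the norm formula \eqref{p-norm} just returns their measure. Throughout, we use the reduction above, so that $\mu$ and $\mu'$ are defined on the same $(\Omega,\Sigma)$. Since both are strictly positive measures representing $Z(\cM)$, they have the same null sets.

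\textbf{Step 1: a computation for small central projections.} Fix $A\in\Sigma$. By property \eqref{D-finite5}, $D(z_A)=\chi_A D(\mathbf{1})$. I claim the following: if $A\subset [D(\mathbf{1})>\delta]$ and $\mu(A)<\delta$ for some $\delta\in(0,1)$, then $\|z_A\|_{\mu,D}=\mu(A)$. To see this, use \eqref{p-norm}.
\begin{itemize}
\item For $0<\lambda<\delta$ we have $[D(z_A)>\lambda]=A$, so the quantity under the infimum is $\max\{\lambda,\mu(A)\}$. Its infimum over these $\lambda$ equals $\mu(A)$, because $\mu(A)<\delta$.
\item For $\lambda\ge\delta$ the quantity is at least $\delta>\mu(A)$.
\end{itemize}
The same computation applies to $(\mu',D')$ with $D'(\mathbf{1})$ in place of $D(\mathbf{1})$. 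This is essentially the argument of Lemma~\ref{delta+}, localized.

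\textbf{Step 2: exhaustion and partition.} Since $\mathbf{1}\neq0$ and $D(z_B)=\chi_B D(\mathbf{1})\neq 0$ for every non-null $B$, we have $D(\mathbf{1})>0$ a.e. Likewise $D'(\mathbf{1})>0$ a.e. Hence the sets
\[
\Omega_k=[D(\mathbf{1})>1/k]\cap[D'(\mathbf{1})>1/k]
\]
increase to $\Omega$ up to a null set.

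Fix $k\ge2$ and $A\subset\Omega_k$. Because $Z(\cM)$ is atomless, the finite measure $\mu+\mu'$ on $\Sigma$ is atomless. By Sierpi\'nski's theorem (the Lyapunov-type intermediate value property), $A$ can be partitioned into finitely many measurable sets $A_1,\dots,A_m$ with $(\mu+\mu')(A_i)<1/k$. In particular $\mu(A_i)<1/k$ and $\mu'(A_i)<1/k$.

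\textbf{Step 3: conclude.} Step 1 applies to each piece for both pairs, which gives
\[
\mu(A_i)=\|z_{A_i}\|_{\mu,D}=\|z_{A_i}\|_{\mu',D'}=\mu'(A_i).
\]
Summing over $i$ yields $\mu(A)=\mu'(A)$ for every $A\subset\Omega_k$. For a general $A\in\Sigma$, apply this to $A\cap\Omega_k$ and let $k\to\infty$; countable additivity then gives $\mu(A)=\mu'(A)$.

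The only delicate point is the partition in Step 2. It must make the pieces simultaneously small for both measures, which is why I partition with respect to $\mu+\mu'$. It also needs to know that atomlessness of $Z(\cM)$ means exactly that this measure has no atoms, which holds because $\mu$ and $\mu'$ share null sets with the measure class of the center. Note that no finiteness or type assumption on $\cM$ is needed, since only central projections are used.
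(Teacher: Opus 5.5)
Your proof is correct and follows essentially the same route as the paper. Your Step 1 is Lemma~\ref{delta+} applied to $z_A$ with $D(z_A)=\chi_A D(\mathbf{1})$, and you use atomlessness of the center to cut $A$ into pieces on which $\|z_{A_i}\|$ returns $\mu(A_i)$ and $\mu'(A_i)$. The only difference is organizational: you get pieces good for both pairs in one pass by restricting to $\Omega_k$ and partitioning with respect to $\mu+\mu'$, whereas the paper reaches the same point through its three-step refinement (first securing the condition for $D$, which passes to subsets, then refining with respect to $\mu'$).
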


\begin{proof} Recall that $Z(\cM)\cong L_\infty(\Omega, \Sigma, \mu)\cong L_\infty(\Omega, \Sigma, \mu')$. Let $A$ be a measurable subset of $\Omega$.

We will prove the statement by three steps.

{\it Step 1}. Suppose
\begin{align}\label{DzAd}
D(z_A)\ge \mu([D(z_A)>0])\chi_A,\,\,\, D'(z_A)\ge \mu'([D'(z_A)>0])\chi_A.
\end{align}
Using \eqref{pmu}, we have
\begin{align*}
\mu(A) & = \mu([D(z_A)>0])\stackrel{\eqref{pmu}}{=}\|z_A\|_{\mu,D}=\|z_A\|_{\mu',D'} \stackrel{\eqref{pmu}}{=}\mu'([D'(z_A)>0])=\mu'(A).
\end{align*}

{\it Step 2}. Suppose
\begin{align}\label{DzA}
D(z_A)\ge \mu([D(z_A)>0])\chi_A.
\end{align}
Since $Z(\cM)$ is atomless, we can find partition $\{A_i\}_{i\ge1}$ of $A$ such that\footnote{Let $h$ be a positive measurable function on an atomless measure space $\Omega$ with a measure $\nu$. Then there exists a partition $\{A_i\}_{i\ge1}$ of ${\rm supp}(h)$ and  $r_i>0$ such that $\chi_{A_i}h\ge r_i \chi_{A_i}$ for all $i\ge1$. Since $\Omega$ is atomless, for each $i$ we can find a partition $\{A_{i,j}\}_{1\le j\le k_i}$ of $A_i$ such that $\nu(A_{i,j})<r_i$ for all $1\le j\le k_i$. Then 
\begin{align}\label{homega} \chi_{A_{i,j}}h\ge \nu(A_{i,j})\chi_{A_{i,j}}\end{align} for all $i,j$.}
\begin{align}\label{dza}
D'(z_{A_i})& = D'(z_{A})\chi_{A_i}\stackrel{\eqref{homega}}{\ge} \mu'(A_i)\chi_{A_i}=\mu'([D'(z_{A_i})>0])\chi_{A_i},\,\, i\ge 1.
\end{align}
For every $i\ge1$, we have
\begin{align}\label{dzaD}
D(z_{A_i})=\chi_{A_i}D(z_A)\stackrel{\eqref{DzA}}{\ge} \chi_{A_i}\mu([D(z_{A})>0])\ge \chi_{A_i}\mu([D(z_{A_i})>0]).
\end{align}
Inequalities \eqref{dza} and  \eqref{dzaD} show that each $A_i$ satisfies \eqref{DzAd}. Hence, by Step 1, we have $\mu(A_i)=\mu'(A_i)$ for all $i\ge1$. Thus
\[
\mu(A)=\sum\limits_{i\ge1} \mu(A_i)=\sum\limits_{i\ge1} \mu'(A_i)=\mu'(A).
\]

{\it Step 3.}
Now consider an arbitrary $A$.
Since $Z(\cM)$ is atomless, again as in Step 2, we can find partition $\{A_i\}_{i\ge 1}$ of $A$ such that
\[
D(z_{A_i})\ge \mu([D(z_{A_i})>0])\chi_{A_i},\,\, i\ge 1.
\]
By Step 2, we have $\mu(A_i)=\mu'(A_i)$. Thus
\[
\mu(A)=\sum\limits_{i\ge1} \mu(A_i)=\sum\limits_{i\ge1} \mu'(A_i)=\mu'(A),
\]
completing the proof.
\end{proof}

Lemmas~\ref{equiv} and  \ref{mu=mu} imply the following result.

\begin{prop}\label{II-1}
Let $\cM$ be a von Neumann algebra of type II$_1$ with the atomless center. If $(\mu,D)$ and $(\mu',D')$ are equivalent, then they coincide.
\end{prop}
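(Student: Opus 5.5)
The statement is a direct combination of two earlier results, so I will simply chain them.

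Let $(\mu,D)$ and $(\mu',D')$ be equivalent pairs on a type II$_1$ algebra $\cM$ with atomless center. By the reduction in Section~\ref{s4}, both measures may be taken on the same $(\Omega,\Sigma)$.

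The first step is to show $\mu=\mu'$. Because $Z(\cM)$ is atomless, Lemma~\ref{mu=mu} applies verbatim and gives this.

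The second step is to show $D=D'$. Since $\mu'=\mu$, the pairs $(\mu,D)$ and $(\mu,D')$ are equivalent, as norms on $LS(\cM)$. Lemma~\ref{equiv} is stated for type II algebras, which includes type II$_1$, so it yields $D=D'$.

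Hence the pairs coincide. The only point needing a check is whether anything in Lemma~\ref{equiv} uses infiniteness; I do not expect it to.

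\begin{itemize}
\item Its argument needs a projection $e$ with $D(e)\neq D'(e)$. In type II$_1$ every projection is finite, so this is available.
\item It then uses \cite[Theorem 5.6.2]{SS08} to get a subprojection with prescribed central trace $\lambda_0\chi_A$, and repeated halving \cite[Lemma 6.5.6]{KR2}.
\item Both tools work in any type II algebra for finite projections.
\end{itemize}

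So the main obstacle is merely confirming the hypotheses of the two cited lemmas, and no new estimate is needed.
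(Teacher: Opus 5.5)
Your proposal is correct and follows the paper's own route: the paper derives Proposition~\ref{II-1} by combining Lemma~\ref{mu=mu}, which gives $\mu=\mu'$ from the atomless center, with Lemma~\ref{equiv}, which gives $D=D'$ once the measures agree. Your check that Lemma~\ref{equiv} needs only finite projections, the prescribed-trace subprojection result, and halving is also sound, since all of these hold in type II$_1$.
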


Now we are in a position to present the proofs of Theorem~\ref{con-equiv}.

\begin{proof}[Proof of Theorem~\ref{con-equiv}] Part (a) follows from Propositions~\ref{III} and~\ref{II-infty}, including the factor cases.

For part (b), when the center of $\cM$ is atomless, the result follows from Proposition~\ref{II-1}, while the factor case follows from Lemma~\ref{equiv}.
\end{proof}

\begin{remark}
As one can see from the proofs of Propositions~\ref{III}, \ref{II-infty}, and~\ref{II-1}, only the quantities $\|p\|_{\mu,D}$ for $p\in P(\mathcal{M})$ are compared there. Consequently, the arguments given in those propositions are sufficient  to prove Corollary~\ref{uniq}.
\end{remark}

\section{The case for 
type $I$ factors or algebras with 
atomless center 
}

In this section, we present the proof of  Theorem~\ref{case of type I}.

Let $L_0(\Omega)=L_0(\Omega, \Sigma, \mu)$ be  the space of equivalence classes of measurable complex functions on an atomless probability measure space $\Omega$.
Then a dimension function $D$ on $P(L_\infty(\Omega))\equiv \left\{\chi_A: A\in \Sigma\right\}$ is defined as
\[
D(\chi_A) = c\chi_A, A \in \Omega,
\]
where $c:\Omega\to(0,\infty)$ is a strictly positive measurable function. Then
\[
D(e_{(t, \infty)}(|x|))=D(\chi_{[|x|>t]})=c\chi_{[|x|>t]},
\]
and hence,
\[
[D(e_{(t, \infty)}(|x|))>t]=[|x|>t]\cap [c>t].
\]

For $x\in L_0(\Omega)$,
set
\[
m_{x,c}(t):
=
\mu([|x|>t]\cap[c>t]).
\]
Then by (\ref{first-form}), the $F$-norm $\left\|\cdot\right\|_{\mu,D}$ can be represented as
\begin{align}\label{function-norm}
\|x\|_{\mu, D} =
\inf\limits_{0<t<1}\{t:m_{x,c}(t)\le t\}.
\end{align}
On the other hand,  \eqref{second-form} can be rewritten as follows 
\begin{align}\label{function-norm2}
    \|x\|_{\mu,D}=\inf\limits_{0<t<1}\max\{t,m_{x,c}(t)\}.
\end{align}

Define the critical value
\[
\alpha_{\mu,D}
=
\|\mathbf{1}\|_{\mu,D}
=
\inf\limits_{0<t<1}\{t:\mu([c>t])\le t\}.
\]
We have 
\begin{align}\label{muct}
\mu([c>t])\le t\quad\text{for  } \alpha_{\mu,D}\le t<1,
\end{align}
and
\begin{align}\label{muct1}
\mu([c>t])>t \quad \text{for  } 0<t<\alpha_{\mu,D}.
\end{align}

\begin{prop}\label{cc'} Let $\cM=L_\infty(\Omega, \Sigma, \mu)$ be an atomless abelian von Neumann algebra and let $c$, $c':\Omega\to(0,\infty)$ be strictly positive measurable functions.
The following are equivalent:
\begin{enumerate}
\item $(\mu,D)$ and $(\mu',D')$ are equivalent.
\item $\mu=\mu'$, $\|\mathbf{1}\|_{\mu,D}=\|\mathbf{1}\|_{\mu',D'}$ and
      $\min\{c, \|\mathbf{1}\|_{\mu,D}\}=\min\{c', \|\mathbf{1}\|_{\mu',D'}\}$.
\end{enumerate}
\end{prop}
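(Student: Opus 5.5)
We prove the two implications separately. Write $\alpha=\|\mathbf{1}\|_{\mu,D}$ and $\alpha'=\|\mathbf{1}\|_{\mu',D'}$.

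\emph{(1)$\Rightarrow$(2).} First, $\mu=\mu'$ by Lemma~\ref{mu=mu}, since $\cM$ is its own atomless center. Next, equivalence gives directly $\alpha=\|\mathbf{1}\|_{\mu,D}=\|\mathbf{1}\|_{\mu,D'}=\alpha'$.

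It remains to show $\min\{c,\alpha\}=\min\{c',\alpha\}$ a.e. Suppose not. By symmetry we may assume that the set $E=[\min\{c',\alpha\}>\min\{c,\alpha\}]$ has positive measure. On $E$ we have $c<\alpha$ and $c<c'$. Shrinking $E$, we find $A\subset E$ with $\mu(A)>0$ and numbers $0<s<s'\le\alpha$ such that $c<s$ and $\min\{c',\alpha\}>s'$ on $A$. Using that $\Omega$ is atomless, we may shrink $A$ further so that $\mu(A)$ takes any prescribed small value.

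We then test the equivalence on $x=a\chi_A$ with $a\ge1$, which gives $[|x|>t]=A$ for all $t<1$. Consequently
\[
m_{x,c}(t)=\mu(A\cap[c>t]),\qquad m_{x,c'}(t)=\mu(A\cap[c'>t]).
\]
For $t\ge s$ the first vanishes, while for $t<s'$ the second equals $\mu(A)$. Choosing $\mu(A)\in(s,s')$, formula \eqref{function-norm2} gives $\|x\|_{\mu,D}\le s$ and $\|x\|_{\mu,D'}\ge\min\{s',\mu(A)\}=\mu(A)>s$. This is a contradiction.

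The main obstacle is arranging $s<\mu(A)<s'$ at the same time as the level constraints. We need $s'-s>0$ of fixed size on a set of measure at least $s'$, but we only know $\mu(E)>0$, which may be tiny.

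To handle this I would instead use $x=\chi_A+y$, where $y$ is supported off $A$ and acts as a "filler." The filler should carry the same contribution under $c$ and $c'$ near level $t\approx s$. A natural choice is $y=\chi_{G}$ with $G\subset[c>\alpha]\cap[c'>\alpha]$, so that $c,c'\ge\alpha\ge t$ there and $m(t)$ contributions agree for $t<\alpha$. By \eqref{muct1}, $\mu([c>t])>t$ for $t<\alpha$, so there is enough mass. Comparing $m_{x,c}$ and $m_{x,c'}$, they differ by $\mu(A\cap[c'>t])-\mu(A\cap[c>t])$ on $(s,s')$. Tuning $\mu(G)$ (atomlessness) makes $m_{x,c}(t)\le t$ but $m_{x,c'}(t)>t$ for some $t\in(s,s')$, which gives distinct norms via \eqref{function-norm}.

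If $[c>\alpha]$ and $[c'>\alpha]$ overlap poorly, I would use $G\subset[\min\{c,c'\}>t]$ for $t$ near $s$ and rely on \eqref{muct1} for both functions.

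\emph{(2)$\Rightarrow$(1).} Since $\mu=\mu'$, it suffices to compare $m_{x,c}$ and $m_{x,c'}$. For $0<t<\alpha$ we have $[c>t]=[\min\{c,\alpha\}>t]=[\min\{c',\alpha\}>t]=[c'>t]$, so $m_{x,c}=m_{x,c'}$ on $(0,\alpha)$. By Proposition~\ref{alpha-0}, both norms are at most $\alpha$. In \eqref{function-norm2}, values of $t\ge\alpha$ give $\max\ge\alpha$ in either case, so the infimum is determined by $t<\alpha$ together with the common bound $\alpha$. Hence the two norms are equal.
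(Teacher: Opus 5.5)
Your direction (2)$\Rightarrow$(1) is correct. You use Proposition~\ref{alpha-0} to get $\|x\|=\min\{I,\alpha\}$, where $I$ is the infimum over $t<\alpha$; the paper instead uses \eqref{muct} to see that $m(t)\le t$ for $t\ge\alpha$. In (1)$\Rightarrow$(2), the steps $\mu=\mu'$, $\alpha=\alpha'$ and the choice of $A,s,s'$ are fine, and you rightly see that $x=a\chi_A$ alone cannot work.

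The gap is the existence of the filler $G$. You require $G$ to lie where \emph{both} $c$ and $c'$ are large, and nothing you cite guarantees such a set of the needed mass.

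The region $[c>\alpha]\cap[c'>\alpha]$ may be null. Take $c'\equiv\frac12$, and let $c=\frac1{10}$ on a set of measure $\frac1{100}$ and $c=\frac12$ elsewhere. Then $\alpha=\alpha'=\frac12$ and $\mu(E)>0$, but $[c'>\alpha]=\emptyset$.

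For the fallback $[\min\{c,c'\}>t]$ you appeal to \eqref{muct1} ``for both functions''. But \eqref{muct1} bounds $\mu([c>t])$ and $\mu([c'>t])$ separately and gives no lower bound on $\mu([c>t]\cap[c'>t])$. For $c(\omega)=\omega$, $c'(\omega)=1-\omega$ on $[0,1]$, one has $\alpha=\alpha'=\frac12$ and $E=[0,\frac12)$. Take the admissible choice $s=\frac25$, $s'=\frac9{20}$. Every region $[\min\{c,c'\}>t]=(t,1-t)$ with $t\ge s$ has measure at most $\frac15$. The construction needs $\mu(A)+\mu(G)>s=\frac25$, which fails when $\mu(A)$ is small, as you arrange it.

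Note also that a single $t$ with $m_{x,c}(t)\le t<m_{x,c'}(t)$ only yields $\|x\|_{\mu,D}\le t\le\|x\|_{\mu,D'}$. Strict inequality needs an interval of such $t$.

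The missing observation is that the symmetry is unnecessary. On the $D$-side you only need an upper bound. Since $c<s$ on $A$, $m_{x,c}(t)\le\mu(G)$ for all $t\in(s,s')$, \emph{wherever} $G$ lies. So take $G\subset[c'>s']\setminus A$ only. There is room: \eqref{muct1} applied to $D'$ gives $\mu([c'>s'])>s'$, which is legitimate since $\alpha_{\mu,D'}=\alpha$ and $s'<\alpha$.

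Now choose $0<\mu(A)<s'$, $\mu(G)=s'-\mu(A)$ and $x=s'\chi_{A\sqcup G}$. Then $\|x\|_{\mu,D'}=s'$, because $A\sqcup G\subset[c'>t]$ for all $t<s'$. On the other side, $\|x\|_{\mu,D}\le\max\{s,s'-\mu(A)\}<s'$, which is the desired contradiction.

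This is exactly the paper's construction, with the roles of $c$ and $c'$ interchanged: there $B\subset[c>r]\setminus A$, $\mu(B)=r-\epsilon$ and $x=r\chi_{A\sqcup B}$.
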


\begin{proof}
(2)$\Rightarrow$(1).
Let $\alpha=\alpha_{\mu,D}=\alpha_{\mu,D'}$ and assume
\[
\min\{c,\alpha\}=\min\{c',\alpha\}.
\]
If $t<\alpha$, then
\begin{align*}
[c>t]=[\min\{c,\alpha\}>t]=[\min\{c',\alpha\}>t]=[c'>t],
\end{align*}
then we have
\[
m_{x,c}(t)=m_{x,c'}(t).
\]
If $t\ge\alpha$, then
\[
m_{x,c}(t)\le\mu([c>t])\stackrel{\eqref{muct}}{\le} t,
\quad
m_{x,c'}(t)\le\mu([c'>t])\stackrel{\eqref{muct}}{\le} t.
\]
Hence, by (\ref{function-norm}), the defining infima coincide and $\|x\|_{\mu, D}=\|x\|_{\mu', D'}$.

(1)$\Rightarrow$(2).
Assume $\left\|\cdot \right\|_{\mu,D}= \left\|\cdot \right\|_{\mu',D'}$. By Lemma~\ref{mu=mu}, we have $\mu=\mu'$.

Taking $x\equiv1$ gives
\[
\alpha:=\alpha_{\mu, D}=\alpha_{\mu, D'}.
\]

Suppose that 
\[
[\min\{c,\alpha\}\neq \min\{c',\alpha\}]
\]
has a positive measure. Without loss of generality, we can assume that the set 
$[\min\{c,\alpha\}>\min\{c',\alpha\}]$ has a positive measure. Since
$$
[\min\{c,\alpha\}>\min\{c',\alpha\}]=[\alpha > c>c']\sqcup[c\ge \alpha >c'],
$$
and $$
\bigcup_{n\geq 1}\left[\alpha>c>c-\frac{1}{n}>c'\right]=
\left[\alpha>c>c'\right],$$
$$
\bigcup_{n\geq 1}\left[c\ge \alpha>\alpha-\frac{1}{n}>c'\right]=
\left[c\ge \alpha >c'\right],$$
it follows that there exist numbers $0<r'<r<\alpha$ such that
\[
[c>r>r'>c']
\]
has a positive measure. Since the measure space is atomless, we can find a measurable subset
$A\subset [c>r>r'>c']$ such that $\epsilon=\mu(A)>0$, where $\epsilon<r-r'$.
By \eqref{muct1} we have $\mu([c>r])>r$.
Again we can find a measurable subset $B\subset [c>r]\setminus A$ such that $\mu(B)=r-\epsilon$.
Note that
\[
\mu(A\sqcup B)=\mu(A)+\mu(B)=\epsilon+r-\epsilon=r.
\]
Set $x=r\chi_{A\sqcup B}$.
By construction, we have
$$
[|x|>t]\cap [c>t] =
\begin{cases}
\emptyset, & \text{if $t\ge r$},\\
A\sqcup B, & \text{if $t<r$}.
		 \end{cases}
$$
Taking into account $\mu(A\sqcup B)=r$, we obtain
\begin{align*}
\|x\|_{\mu,D} & \stackrel{(\ref{function-norm2})}{=}\inf\limits_{0<t<1}\max\{t, \mu([|x|>t]\cap [c>t])\} \\
& =\inf\limits_{0<t<r}\max\{t, \mu(A\sqcup B)\}=r.
\end{align*}
Since $A\subset [r'>c']$,  for every $t\in (r',r)$, we have  
$$
[|x|>t]\cap [c'>t]\subset [|x|>t]\cap [c'>r'] \subset B.
$$
Taking into account $\mu(B)=r-\epsilon>r'$, we obtain
\begin{align*}
\|x\|_{\mu,D} & \stackrel{(\ref{function-norm2})}{=}\inf\limits_{0<t<1}\max\{t, \mu([|x|>t]\cap [c'>t])\}\\
& \le \inf\limits_{r'<t<r}\max\{t, \mu([|x|>t]\cap [c'>t])\}\le \inf\limits_{r'<t<r}\max\{t, \mu(B)\}=r-\epsilon.
\end{align*}
This contradicts equality of norms.
Thus $\mu([\min\{c,\alpha\}\neq \min\{c',\alpha\}])=0$, and therefore
$\min\{c,\alpha\}=\min\{c',\alpha\}$.
The proof is complete.\end{proof}

Let $\cM$ be a type I von Neumann algebra with an atomless center $Z(\cM)\equiv L_{\infty}(\Omega,\Sigma,\mu)$, where $(\Omega, \Sigma, \mu)$ is an atomless probability measure space.
Fix  a faithful normal semifinite extended center-valued trace $T$ on $\cM$ such that $T(p)=1$ on $\Omega$, where $p$ is a faithful ($c(p)=\mathbf{1}$) abelian projection in $\cM$. Then any dimension function $D$ on $P(\cM)$ is uniquely defined by the value  $D(p)=c$.

\begin{lemma}\label{reduce-to-finite}
Let $\cM$ be a  von Neumann algebra of type I  with the  center $Z(\cM)\cong L_{\infty}(\Omega,\Sigma,\mu)$.
Then there exists a finite projection $q\in P(\cM)$ such that
\[
\|\mathbf{1}\|_{\mu,D}=\|q\|_{\mu,D}.
\]
\end{lemma}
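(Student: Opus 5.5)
The plan is to pick a finite projection $q$ whose dimension function agrees with $D(\mathbf{1})$ wherever $D(\mathbf{1})<1$ and is at least $1$ everywhere else. By \eqref{p-norm}, the value $\|e\|_{\mu,D}$ of a projection $e$ depends only on the sets $[D(e)>\lambda]$ for $0<\lambda<1$. So it is enough to find a finite $q\in P(\cM)$ with
\[
[D(q)>\lambda]=[D(\mathbf{1})>\lambda]\quad\text{for all } 0<\lambda<1 .
\]
This holds in particular if $D(q)=D(\mathbf{1})$ on $[D(\mathbf{1})<\infty]$ and $D(q)\ge 1$ on $[D(\mathbf{1})=\infty]$.

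First we split $\cM$ by a central projection. Let $z$ be the largest central projection such that $\cM z$ is finite. Since $\cM$ is of type I, $\cM(\mathbf{1}-z)$ is properly infinite, i.e.\ a direct sum of type I$_\infty$ parts. By property \eqref{D-finite} of dimension functions and \cite[Theorem 1.11.3]{BCLSZ}, $D(z)=D(\mathbf{1})\chi_{A_0}$ is a.e.\ finite, where $z=z_{A_0}$. Moreover $D(\mathbf{1}-z)=\infty\cdot\chi_{\Omega\setminus A_0}$. On the finite part we take $z$ itself as the first piece of $q$.

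On the properly infinite part we build the second piece from abelian projections. Type I gives a faithful abelian projection $p$ of $\cM$, and we set $c:=D(p)$, a strictly positive measurable function. Partition $\Omega\setminus A_0$ into the measurable sets
\[
B_k=\bigl[\tfrac1k\le c<\tfrac1{k-1}\bigr]\setminus A_0,\qquad k\ge1,
\]
with the convention $\tfrac10=\infty$. Since $\cM z_{B_k}$ is of type I$_\infty$, the projection $z_{B_k}$ is properly infinite. It is therefore a sum of infinitely many mutually orthogonal projections, each equivalent to $z_{B_k}p$ (via the halving/comparison argument in \cite{KR2} for properly infinite projections). Let $q_k$ be the sum of $k$ of them. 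Then $D(q_k)=k\,c\,\chi_{B_k}$, which satisfies $1\le D(q_k)<\infty$ on $B_k$.

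Finally set $q=z+\sum_k q_k$; the summands are orthogonal because they have disjoint central supports. By normality (property (6)) together with (3) and (5), we get $D(q)=D(\mathbf{1})\chi_{A_0}+\sum_k k c\chi_{B_k}$. This function is finite a.e., so $q$ is finite by property \eqref{D-finite}. It also meets the two requirements above: it equals $D(\mathbf{1})$ on $A_0$ and is at least $1$ on $\Omega\setminus A_0$. Hence $\|q\|_{\mu,D}=\|\mathbf{1}\|_{\mu,D}$.

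The main obstacle is the existence of $k$ mutually orthogonal copies of $z_{B_k}p$ inside $z_{B_k}$. For this we would invoke the structure theorem $\cM z_{B_k}\cong\bigoplus_{\kappa\ge\aleph_0}\cM_\kappa$, where each $\cM_\kappa$ is of type I$_\kappa$. In each summand the unit is a sum of $\kappa$ equivalent faithful abelian projections, and abelian projections with the same central support are equivalent. So we can choose $k$ of them, each equivalent to $z_{B_k}p$.
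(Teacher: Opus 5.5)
Your proposal is correct and follows essentially the same route as the paper. Both split off the finite central summand, then on the properly infinite part add $n_i$ orthogonal faithful abelian projections over a partition $\{B_i\}$ with $n_i c\ge 1$ on $B_i$, which gives $[D(q)>\lambda]=[D(\mathbf{1})>\lambda]$ for $0<\lambda<1$; your choice $B_k=[\tfrac1k\le c<\tfrac1{k-1}]$ with $n_k=k$ is one such partition. One small overstatement: $z_{B_k}$ need not be a sum of copies of $z_{B_k}p$ when homogeneous summands of different infinite degrees occur, but you only need $k$ such copies, and your last paragraph supplies them correctly.
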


\begin{proof} Let $\Omega = A\sqcup B$ be a partition of $\Omega$ into measurable parts such that $z_A\cM$ is finite and $z_B\cM$ is infinite. 
If the projection $z_B$ is zero, then $\mathbf{1}=z_A$ is finite and there is nothing to prove.
There, 
we may assume that   $z_B$ is nonzero.

Take a partition $\{B_i\}_{i\ge1}$ of $B$ and a sequence $\{n_i\}_{i\ge1}$ such that
\begin{align}\label{B_i}
\sum\limits_{i\ge1}\frac{1}{n_i}\chi_{B_i}\le c \chi_B.
\end{align}
Let $\{q_i\}_{i\ge1}$ be a sequence of mutually orthogonal faithful abelian projections in $z_B\cM$ (see \cite[page 299 the proof of Theorem 1.27]{Tak1}.
Set
\[
q=z_A+\sum\limits_{i\ge1}\sum\limits_{j=1}^{n_i}q_jz_{B_i}\in P_{fin}(\cM).
\]
Since $q_i$'s are equivalent\cite[Proposition 6.4.6]{KR2}, it follows that $D(q_j)=D(p)$ and  
\begin{align*}
\chi_{B_i}D(q) & = \sum\limits_{j=1}^{n_i}D(q_j)\chi_{B_i}=\sum\limits_{j=1}^{n_i}D(p)\chi_{B_i}=
n_i c \chi_{B_i}\stackrel{\eqref{B_i}}{\ge}\chi_{B_i}.
\end{align*}
Therefore,
\begin{align}\label{D(q)1}
\chi_B D(q) & \ge \chi_B.
\end{align}

Let us show that 
\begin{align}\label{q+z_E}
\|q\|_{\mu,D}=\|\mathbf{1}\|_{\mu,D}.
\end{align}

For $0<\lambda<1$,  we have 
\begin{align*}
[D(q)>\lambda] & = [D(z_A q +z_Bq)>\lambda]=[\chi_A D(\mathbf{1})+\chi_B D(q)>\lambda]\\
& =[\chi_A D(\mathbf{1})>\lambda]\sqcup [\chi_B D(q)>\lambda] \stackrel{(\ref{D(q)1})}{=} [\chi_A D(\mathbf{1})>\lambda] \sqcup B\\
& = [\chi_A D(\mathbf{1})>\lambda]\sqcup [\chi_B D(\mathbf{1})>\lambda]= [D(\mathbf{1})>\lambda].
\end{align*}
Hence, taking into account \eqref{p-norm}, we obtain  \eqref{q+z_E}. The proof is complete.
\end{proof}

Let $D'$ be another dimension function on $P(\cM)$ with $D'(p)=c'$.
Set
\begin{align*}
c_{\min} =\min\{c, \|\mathbf{1}\|_{\mu,D}\},\,\,\, D_{\min}=c_{\min}T,
\end{align*}
and
\begin{align*}
c'_{\min} =\min\{c', \|\mathbf{1}\|_{\mu,D'}\},\,\,\, D'_{\min}=c'_{\min}T.
\end{align*}

\begin{prop}\label{I-case} Let $\cM$ be a type I von Neumann algebra with an atomless center $Z(\cM)$. The pairs $(\mu,D)$ and $(\mu,D')$ are equivalent if and only if  $D_{\min}=D'_{\min}$.
\end{prop}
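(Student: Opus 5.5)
The plan is to write $D=cT$ and $D'=c'T$, with $T$ normalized so that $T(p)=\mathbf{1}$ for a faithful abelian projection $p$. Put $\alpha=\|\mathbf{1}\|_{\mu,D}$ and $\alpha'=\|\mathbf{1}\|_{\mu,D'}$. Both directions will use the same two facts. First, for every projection $e$ the function $T(e)$ takes values in $\{0,1,2,\dots,\infty\}$, and $T(e)\ge 1$ exactly on the support of $c(e)$. Second, by \eqref{p-norm} and \eqref{second-form}, the norm only involves the sets $[D(e_{(\lambda,\infty)}(|x|))>\lambda]$ for $0<\lambda<1$. I will model the argument on Proposition~\ref{cc'}, replacing the abelian algebra by subprojections of $p$ and of the finite projection $q$ from Lemma~\ref{reduce-to-finite}.

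\emph{Sufficiency.} Assume $c_{\min}=c'_{\min}$. I first show $\alpha=\alpha'$. On $[c<\alpha]$ we have $c'=c$, and on $[c\ge\alpha]$ we have $c'\ge\alpha$. Hence for $t<\alpha$ and any $e$ the sets $[cT(e)>t]$ and $[c'T(e)>t]$ coincide. Indeed, where $T(e)\ge1$ and $c\ge\alpha$, both functions exceed $t$; where $c<\alpha$ they are equal; and where $T(e)=0$ both vanish. Applied to $e=\mathbf{1}$, this shows that the functions $\lambda\mapsto\max\{\lambda,\mu([D(\mathbf{1})>\lambda])\}$ and $\lambda\mapsto\max\{\lambda,\mu([D'(\mathbf{1})>\lambda])\}$ agree on $(0,\alpha)$. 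The first is $\ge\alpha$ everywhere, so $\alpha'\ge\alpha$. By symmetry (the roles of $c_{\min}$ and $c'_{\min}$ are interchangeable once we know $c'=c$ off a set where both exceed $\min\{\alpha,\alpha'\}$), we get $\alpha'=\alpha$. I expect this symmetry step to need a short careful check with $\min\{\alpha,\alpha'\}$ in place of $\alpha$.

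With $\alpha=\alpha'$ in hand, for $\lambda<\alpha$ the level sets coincide by the observation above. For $\lambda\ge\alpha$, both $\mu([D(e)>\lambda])$ and $\mu([D'(e)>\lambda])$ are at most $\mu([D(\mathbf{1})>\lambda])\le\lambda$, using monotonicity of $D$. So, exactly as in (2)$\Rightarrow$(1) of Proposition~\ref{cc'}, the infima in \eqref{function-norm}-type form agree for all $x\in LS(\cM)$.

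\emph{Necessity.} Assume the norms agree. Then $\alpha=\alpha'$, by evaluating at $\mathbf{1}$. Suppose $c_{\min}\ne c'_{\min}$ on a set of positive measure. As in Proposition~\ref{cc'}, we may find $0<r'<r<\alpha$ such that $[c>r>r'>c']$ has positive measure, and then $A$ inside this set with $0<\epsilon=\mu(A)<r-r'$. Since $r<\alpha=\|q\|_{\mu,D}$, where $q$ is the finite projection of Lemma~\ref{reduce-to-finite}, we have $\mu([D(q)>r])=\mu([D(\mathbf{1})>r])>r$. Atomlessness then gives $B\subset[D(q)>r]\setminus A$ with $\mu(B)=r-\epsilon$. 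Set
\[
x=r\bigl(z_Ap+z_Bq\bigr).
\]
Then $D(e_{(t,\infty)}(x))$ exceeds $t$ exactly on $A\sqcup B$ for $t<r$, so $\|x\|_{\mu,D}=r$. For $t\in(r',r)$ we have $D'(z_Ap)=c'\chi_A<t$ on $A$, so the relevant set is contained in $B$. This gives $\|x\|_{\mu,D'}\le r-\epsilon$, a contradiction.

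The main obstacle I expect is the bookkeeping that makes $\|\mathbf{1}\|$ (rather than $\|p\|$) the correct threshold. On the necessity side, this is handled by using $q$ from Lemma~\ref{reduce-to-finite} to supply the set $B$. On the sufficiency side, it is the verification that $c_{\min}=c'_{\min}$ already forces $\alpha=\alpha'$.
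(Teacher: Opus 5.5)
\textbf{There is a gap in the sufficiency direction, at the step $\alpha=\alpha'$.}

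Your claim ``$c'=c$ on $[c<\alpha]$'' follows from $\min\{c,\alpha\}=\min\{c',\alpha'\}$ only where $c<\alpha'$. So it presupposes $\alpha\le\alpha'$. If $\alpha'<\alpha$, it fails on $[c'>\alpha']$, where $c=\alpha'<c'$. Your derivation of $\alpha'\ge\alpha$ is therefore circular.

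The symmetric repair with $\beta=\min\{\alpha,\alpha'\}$ does not close the gap. From $\min\{c,\beta\}=\min\{c',\beta\}$ you only learn that $\lambda\mapsto\max\{\lambda,\mu([D(\mathbf{1})>\lambda])\}$ and its $D'$-analogue agree on $(0,\beta)$. That says nothing about the two functions on $[\beta,\max\{\alpha,\alpha'\})$, so it is compatible with $\alpha\neq\alpha'$.

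The missing ingredient is a monotonicity observation. Suppose $\alpha'<\alpha$. Then $c_{\min}=c'_{\min}\le\alpha'<\alpha$ forces $c<\alpha$ a.e. Hence $c=\min\{c',\alpha'\}\le c'$, so $D\le D'$. This gives $\alpha=\|\mathbf{1}\|_{\mu,D}\le\|\mathbf{1}\|_{\mu,D'}=\alpha'$, a contradiction. The case $\alpha<\alpha'$ is symmetric.

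The paper avoids comparing $\alpha$ and $\alpha'$ altogether. Run your own level-set argument with $D_{\min}$ in place of $D'$. The hypothesis becomes $\min\{c,\alpha\}=\min\{c_{\min},\alpha\}$, which holds trivially. For $\lambda\ge\alpha$, $D_{\min}\le D$ gives $\mu([D_{\min}(e)>\lambda])\le\mu([D(\mathbf{1})>\lambda])\le\lambda$. So $\|\cdot\|_{\mu,D}=\|\cdot\|_{\mu,D_{\min}}$ for each single $D$. Then $\|\cdot\|_{\mu,D}=\|\cdot\|_{\mu,D_{\min}}=\|\cdot\|_{\mu,D'_{\min}}=\|\cdot\|_{\mu,D'}$ follows at once.

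A minor point: in your $\lambda\ge\alpha$ step, the bound on $\mu([D'(e)>\lambda])$ should go through $D'(\mathbf{1})$, not $D(\mathbf{1})$. This is harmless once $\alpha'=\alpha$ is established.

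\textbf{Necessity.} This direction is correct, and it takes a genuinely different and more direct route than the paper. You transplant the test element of Proposition~\ref{cc'} into $\cM$ itself as $x=r(z_Ap+z_Bq)$. On $A$ you exploit $D(p)=c$ and $D'(p)=c'$; on $B$ you use a projection with large dimension. The paper instead does the following:
\begin{enumerate}
\item compresses to $q\cM q$;
\item uses Lemma~\ref{delta+} and the triangle inequality to get $\|z_{\Omega\setminus A}+z_Ap\|_{\mu,D}\ge\alpha-\frac1m$;
\item applies Proposition~\ref{cc'} to the abelian algebra $Z(\cM)e$, which needs $D(e)$ finite and hence the compression.
\end{enumerate}
Your construction needs no finiteness. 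In fact $q$ is unnecessary in your argument: $z_B$ with $B\subset[D(\mathbf{1})>r]\setminus A$ works just as well.
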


\begin{proof} $(\Leftarrow)$. Let us first  show that
\[
\left\|\cdot\right\|_{\mu,D}=\left\|\cdot\right\|_{\mu,D_{\min}} .
\]

Take $x\in LS(\cM)$ and  set
$$
x_\alpha=|x|e_{(0,\alpha)}(|x|)+\alpha e_{[\alpha,\infty)}(|x|),
$$
where $\alpha=\|\mathbf{1}\|_{\mu,D}$.

Let $e=e_{(\lambda, \infty)}(x_\alpha)$, $\lambda>0$. By
\cite[Proposition 6.4.6(ii)]{KR2}, we have
\[
e\succeq c(e)p,
\]
where $c(e)$ denotes the central support of $e$. Since the dimension
function $D$ is monotone, it follows that
\begin{equation}\label{supp(e)}
D(e)\ge \chi_E D(p)
=\chi_E c,
\end{equation}
where $E\in \Sigma$ such that $c(e)=z_E$ (see~\eqref{zA}).
The same inequality holds with $D_{\min}$ in place of $D$.

Set
\[
A:=[c\ge \alpha]\cap E, \qquad B:=[c<\alpha]\cap E.
\]

Let $0<\lambda<\alpha$. Then, by \eqref{supp(e)},  we obtain
\begin{align*}
A\cap [D(e)>\lambda] & \stackrel{\eqref{supp(e)}}{=}A =A\cap[D_{\min}(e)>\lambda].
\end{align*}
Since $\chi_B c=\chi_B c_{\min}$, it follows that $\chi_B D=\chi_B D_{\min}$, and hence,
\begin{align*}
B\cap [D(e)>\lambda] &  =B\cap[D_{\min}(e)>\lambda].
\end{align*}
Thus, 
\begin{align*}
[D(e)>\lambda]
&=(A\cap [D(e)>\lambda])\sqcup (B\cap [D(e)>\lambda]) \\
&=
(A\cap [D_{\min}(e)>\lambda])
\sqcup (B\cap [D_{\min}(e)>\lambda]) \\
&=[D_{\min}(e)>\lambda].
\end{align*}
Thus,
\begin{align*}
\|x_\alpha\|_{\mu,D}
&=\inf_{0<\lambda<\alpha}
\max\Bigl\{
\lambda,\,
\mu\bigl([D(e_{(\lambda,\infty)}(x_{\alpha}))>\lambda]\bigr)
\Bigr\} \\
&=
\inf_{0<\lambda<\alpha}
\max\Bigl\{
\lambda,\,
\mu\bigl([D_{\min}(e_{(\lambda,\infty)}(x_{\alpha}))>\lambda]\bigr)
\Bigr\} =\|x_\alpha\|_{\mu,D_{\min}}.
\end{align*}
By Proposition~\ref{alpha-0}, we have
\[
\|x\|_{\mu,D}=\|x_\alpha\|_{\mu,D}=\|x_\alpha\|_{\mu,D_{\min}}=\|x\|_{\mu,D_{\min}}
\]

Now assume that $D_{\min}=D'_{\min}$. Then
\[
\left\|\cdot\right\|_{\mu,D}=\left\|\cdot\right\|_{\mu,D_{\min}}=\left\|\cdot\right\|_{\mu,D'_{\min}}=\left\|\cdot\right\|_{\mu,D'}.
\]

 $(\Rightarrow)$. Assume that $\left\|\cdot\right\|_{\mu, D}=\left\|\cdot\right\|_{\mu, D'}$. 
By Lemma~\ref{reduce-to-finite} there exists a finite projection $q\in P(\cM)$ such that
$\|q\|_{\mu, D}=\|\mathbf{1}\|_{\mu,D}$.
Replacing $\cM$ to $q\cM q$, we can assume that $\cM$ is a finite von Neumann algebra of type I.

Assume $c_{\min}\neq c'_{\min}$. Without loss of generality,
we may  assume that  the set $[c'_{\min}>c_{\min}]$ has a positive measure. Since
$$\bigcup_{n\geq 1}\left[\alpha-\frac{1}{n}\geq c_{\min}'>c_{\min}>\frac{1}{n}\right]=\left[c_{\min}'>c_{\min}\right],$$
it follows that there exists $m\in\mathbb{N}$ such that the set $\left[\alpha-\frac{1}{m}\geq c_{\min}'>c_{\min}>\frac{1}{m}\right]$ has a positive measure. Taking into account $Z(\cM)$ which is atomless, we can find a measurable set 
\begin{align}\label{A}
 A\subset \left[\alpha-\frac{1}{m}\geq c_{\min}'>c_{\min}>\frac{1}{m}\right]
\end{align}
with the measure  $\mu(A)=\frac{1}{m}$.

We have 
$$
D(z_{A})\geq D(z_{A}p)=\chi_Ac\stackrel{(\ref{A})}{=}\chi_{A}c_{\min}\stackrel{(\ref{A})}{>}\frac{1}{m}\chi_{A}.
$$
Thus, 
\begin{align*}
    \mu(A)=\frac{1}{m}<D(z_A)(\omega)
\end{align*}
for almost all $\omega\in A$. By Lemma \ref{delta+}, we have 
\begin{align*}
    \|z_A\|_{\mu,D}=\mu(A)=\frac{1}{m}.
\end{align*}

Set
\[
e:=z_{\Omega\setminus A}+z_Ap.
\]
Consider the abelian von Neumann algebra $Z(\cM)e$ with the dimension function $D|_{P(Z(\cM)e)}$. 
Observing that $\|z_A(\mathbf{1}-p)\|_{\mu,D}\stackrel{\mbox{\tiny  \cite[Prop. 4.3.13 (\romannumeral2)]{BCLSZ}}}{\leq}\|z_A\|_{\mu,D}=\frac{1}{m}$   and $e=\mathbf{1}-z_A(\mathbf{1}-p)$, we have
\begin{align}\label{q}
    \|e\|_{\mu,D}\geq\|\mathbf{1}\|_{\mu,D}-\|z_A(\mathbf{1}-p)\|_{\mu,D}\geq\alpha-\frac{1}{m}.
\end{align}

Note that 
\begin{align}\label{D(q)=}
    D(e)=D(z_{\Omega\setminus A}+z_Ap)=\chi_{\Omega\setminus A}D(\mathbf{1})+\chi_Ac,
\end{align}
and 
\begin{align}\label{D_(q)}
D'(e)=\chi_{\Omega\backslash A}D' (\mathbf{1})+\chi_Ac'.
\end{align}

Applying  Proposition \ref{cc'} to the abelian von Neumann algebra $Z(\cM)e$, we have
\begin{align}\label{=}
    \min\{D(e),\|e\|_{\mu,D}\}=\min\{D'(e),\|e\|_{\mu,D'}\}
\end{align}
In addition,
\begin{align*}
\chi_A \|e\|_{\mu,D}\stackrel{\eqref{q}}{\ge} \chi_A \left(\alpha -\frac{1}{m}\right)\stackrel{\eqref{A}}{\ge} \chi_A c\stackrel{\eqref{D(q)=}}{=} \chi_A D(e).      
\end{align*}
Similarly, using \eqref{D_(q)}, we have
\begin{align*}
\chi_A \|e\|_{\mu,D'}\ge \chi_A D'(e).      
\end{align*}
Thus,
\begin{align*}
    \chi_A\min\{D(e),\|e\|_{\mu,D}\}=\chi_Ac=\chi_Ac_{\min},
\end{align*}
\begin{align*}
\chi_A\min\{D'(e),\|e\|_{\mu,D'}\}=\chi_Ac'=\chi_Ac_{\min}'.
\end{align*} 
By (\ref{=}), we have 
$$\chi_Ac_{\min}=\chi_Ac'_{\min},$$, which is in contradiction to the choice of the set $A$ (see \eqref{A}).
The proof is complete.
\end{proof}
\begin{remark}
It should be noted that ``if'' part of the above Proposition~\ref{I-case} holds for a type I von Neumann algebra with an arbitrary center.
\end{remark}

Now we consider the case of type I-factors.

For a type I factor $\mathcal{M}=B(H)$ we have $LS(\mathcal{M})=\mathcal{M}$ \cite[Theorem 2.5.7]{BCLSZ}, and its center 
$Z(\mathcal{M})=\mathbb{C}\mathbf{1}$. Hence,  the underlying measure space is reduced to a single point, that is, $\Omega=\{\omega\}$ with $\mu(\{\omega\})=1$. Consequently, any dimension function on $P(B(H))$ is a real-valued function and is uniquely determined by its value on a minimal projection.
More precisely, if  $p$ is a minimal projection and suppose $D(p)=\delta>0$, then  for every projection $q\in P(B(H))$,  we have
\[
D(q)=\delta\,\mathrm{rank}(q),
\]
where ${\rm rank}(q)=\dim q(H)$ is the rank of projection $q$.

\begin{lemma}\label{I-factor}
Let $p$ be a minimal projection in $B(H)$. Then 
\begin{align*}
\|p\|_{\mu,D}=\min\{D(p), 1\}.
\end{align*}
In particular, $(\mu,D)$ and $(\mu,D')$ are equivalent if and only if $\min\{D(p),1\}=\min\{D'(p),1\}$.
\end{lemma}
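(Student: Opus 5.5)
Since $Z(B(H))=\mathbb{C}\mathbf{1}$, the measure space is a single point $\{\omega\}$ with $\mu(\{\omega\})=1$, and $D(p)=\delta>0$ is a positive number. Formula \eqref{p-norm} then reduces to a one-variable computation. For $0<\lambda<1$, the set $[D(p)>\lambda]$ is either all of $\Omega$ (when $\lambda<\delta$) or empty (when $\lambda\ge\delta$). Hence
\[
\max\{\lambda,\mu([D(p)>\lambda])\}=\begin{cases}1, & \lambda<\delta,\\ \lambda, & \lambda\ge \delta.\end{cases}
\]
If $\delta\ge 1$, every $\lambda\in(0,1)$ falls in the first case, so the infimum equals $1$. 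If $\delta<1$, the first case contributes $1$ and the second contributes $\inf_{\delta\le\lambda<1}\lambda=\delta$. In either case $\|p\|_{\mu,D}=\min\{\delta,1\}$, which is the first claim.

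For the ``in particular'' part, necessity is immediate: if the pairs are equivalent, apply the first claim to the minimal projection $p$ under both $D$ and $D'$.

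Sufficiency is the only step with content, and I would treat it in two ways. The first route is the Remark after Proposition~\ref{I-case}, which says the ``if'' direction holds for type I algebras with arbitrary center. Here $T=\mathrm{rank}$, $c=\delta$, and $\|\mathbf{1}\|_{\mu,D}=\min\{\delta,1\}$ by the same computation, since $D(\mathbf{1})\ge\delta$ and equals $\infty$ when $H$ is infinite-dimensional; the finite-dimensional case I would check directly. So $c_{\min}=\min\{\delta,\min\{\delta,1\}\}=\min\{\delta,1\}$, and the hypothesis gives $D_{\min}=D'_{\min}$.

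The second route is a direct check, in case one wants to avoid relying on that remark. For $x\in B(H)$ and $0<\lambda<1$, the quantity $\mu([\delta\,\mathrm{rank}(e_{(\lambda,\infty)}(|x|))>\lambda])$ equals $1$ if $e_{(\lambda,\infty)}(|x|)\ne0$ and $\delta>\lambda$ (since the rank is then at least $1$), and equals $0$ otherwise. This value depends on $\delta$ only through whether $\lambda<\delta$. For $\lambda<\min\{\delta,1\}$, the condition $\lambda<\delta$ holds for both $\delta$ and $\delta'$, so the two expressions agree. For $\lambda\ge\min\{\delta,1\}$, the term $\max\{\lambda,\cdot\}$ is at least $\min\{\delta,1\}$, and the norm is at most $\|\mathbf{1}\|=\min\{\delta,1\}$ by Proposition~\ref{alpha-0}. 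Therefore the infimum is determined by $\lambda<\min\{\delta,1\}$ alone, and both norms coincide.

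I expect no real obstacle. The one point needing care is the case $\delta<1$ with $\lambda$ near $1$, together with confirming $\|\mathbf{1}\|_{\mu,D}=\min\{\delta,1\}$.
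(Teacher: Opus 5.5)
Your computation of $\|p\|_{\mu,D}$ is exactly the paper's argument, and your necessity argument for the ``in particular'' clause is fine. The paper itself stops after the first claim and never argues the equivalence. Your sufficiency part, however, rests on a false identity: $\|\mathbf{1}\|_{\mu,D}$ is not $\min\{\delta,1\}$. The same one-point computation gives $\|\mathbf{1}\|_{\mu,D}=\min\{\delta\dim H,1\}$. This equals $1$ when $H$ is infinite-dimensional: you note yourself that $D(\mathbf{1})=\infty$ there, so $[D(\mathbf{1})>\lambda]=\Omega$ for every $\lambda$. It also exceeds $\delta$ whenever $\dim H\ge 2$ and $\delta<1$.

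In your first route this error is harmless. All you need there is $c_{\min}=\min\{\delta,\|\mathbf{1}\|_{\mu,D}\}=\min\{\delta,1\}$, and that does hold because $\min\{\delta,1\}=\|p\|_{\mu,D}\le\|\mathbf{1}\|_{\mu,D}\le 1$.

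Your second route genuinely breaks. The bound ``the norm is at most $\min\{\delta,1\}$'' is false. So is your description of $\mu([\delta\,\mathrm{rank}(e_{(\lambda,\infty)}(|x|))>\lambda])$ for $\lambda\ge\delta$: a spectral projection of rank $k\ge 2$ still gives the value $1$ for $\lambda\in[\delta,k\delta)\cap(0,1)$. For instance, take $\delta=0.3$ and $x=0.9q$ with $\mathrm{rank}\,q=2$. Then $\|x\|_{\mu,D}=0.6>\min\{\delta,1\}$. The infimum is attained in the region $\lambda\ge\delta$ that you discarded, and there the level sets depend on $\delta$ beyond the comparison $\lambda<\delta$.

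The repair is a simple case split. It makes sufficiency trivial and avoids the Remark entirely. The condition $\min\{\delta,1\}=\min\{\delta',1\}$ leaves two cases.
\begin{itemize}
\item If $\delta=\delta'<1$, then $D=D'$ because $D(q)=\delta\,\mathrm{rank}(q)$, and there is nothing to prove.
\item If $\delta,\delta'\ge 1$, then for every $\lambda\in(0,1)$ and every projection $e$ one has $D(e)>\lambda\iff e\neq 0\iff D'(e)>\lambda$. Hence the expressions in \eqref{second-form} agree for every $\lambda$, and both norms equal $\min\{\|x\|_{B(H)},1\}$.
\end{itemize}
Your correct observation that the level sets agree for $\lambda<\min\{\delta,1\}$ is exactly the second case. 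You only needed to notice that the first case leaves nothing to check.
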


\begin{proof}
If $D(p)\ge 1$, then \[
\mu([D(p)>\lambda])=1
\]
for all $0<\lambda<1$. Thus
\[
\|p\|_{\mu,D}\stackrel{\eqref{p-norm}}{=}\inf\limits_{0<\lambda <1}\max\left\{ \lambda, \mu\left(\left[D(p) > \lambda\right]\right) \right\}=
\inf\limits_{0<\lambda <1}\max\left\{ \lambda, 1\right\}=1.
\]

If $0<D(p)<1$, then
$$
\max\left\{ \lambda, \mu\left(\left[D(p) > \lambda\right]\right) \right\}=
\begin{cases}
1, & \text{if $\lambda<D(p)$},\\
\lambda, & \text{if $\lambda\ge D(p)$}.
		 \end{cases}
$$
Thus, 
\[
\|p\|_{\mu,D}\stackrel{\eqref{p-norm}}{=}\inf\limits_{0<\lambda<1}\max\left\{ \lambda, \mu\left(\left[D(p) > \lambda\right]\right) \right\}
=D(p).
\]
Therefore, 
\[
\|p\|_{\mu,D}
=
\min\{D(p),1\},
\]
completing the proof.
\end{proof}

\begin{proof}[Proof of Theorem~\ref{case of type I}] The proof  follows directly from Propositions~\ref{mu=mu},~\ref{I-case} and Lemma~\ref{I-factor}.
\end{proof}

\section{The case for von Neumann algebra having atomic centers}\label{s6}

We show that the assumptions in Theorems~\ref{con-equiv}(b) and~\ref{case of type I} -- namely, that the algebra is of type II$_1$, or  of type I with an atomless center, or  a factor -- are essential. Specifically, we will show below that for a von Neumann algebra of type II$_1$ or type I whose center contains at least two atoms, both theorems fail.

\subsection{Example $1$}
Let $\cN$ be a von Neumann factor with a faithful normal tracial state $\tau$.
Set
\[
\mathcal{M}=\bigoplus_{i=1}^n \cN.
\]
The center of $\cM$ is $Z(\mathcal{M})\cong L_\infty(\Omega,\Sigma,\mu)$ with $\Omega=\{1,\ldots, n\}$, $\mu(\{i\})=m_i>0$, $i=1,\ldots, n$ and
$\sum\limits_{i=1}^n m_i=1$.

Define
\[
D(p)=(d_1 \tau(p_1),\ldots, d_n \tau(p_n)),\,\, p=(p_1, \ldots, p_n)\in P(\cM),
\]
where $d_i>0$, $i=1, \ldots, n$.
Suppose that 
\begin{align}\label{dm}
\max\{d_1, \ldots, d_n\}<\min\{m_1, \ldots, m_n\}.
\end{align}

Let us  compute $\|x\|_{\mu,D}$, where  $x\in LS(\cM)$.

Let $x = (x_1, \ldots, x_n)\in LS(\cM)$.    We have
\[
D(e_{(\lambda,\infty)}(|x|)) = \Bigl(d_1\tau\bigl(e_{(\lambda,\infty)}(|x_1|)\bigr),\ldots,
d_n\tau\bigl(e_{(\lambda,\infty)}(|x_n|)\bigr)\Bigr).
\]
Thus, 
$$
[D(e_{(\lambda,\infty)}(|x|)) > \lambda]=\left\{i: d_i\tau\bigl(e_{(\lambda,\infty)}(|x_i|)\bigr) > \lambda\right\}.
$$
Now, for any fixed $\lambda\in(0,1)$, the quantity $\max\{\lambda,\,
\mu([D(e_{(\lambda,\infty)}(|x|)) > \lambda])\}$ equals
\[
\max\Bigl\{\lambda,\;\mu\left(\left\{i: d_i\tau\bigl(e_{(\lambda,\infty)}(|x_i|)\bigr) > \lambda\right\}\right)\Bigr\}.
\]
Observe that $\tau(e_{(\lambda,\infty)}(|x_i|))$ is a nonincreasing function of $\lambda$ with values in $[0,1]$.  
Define
\begin{align}\label{lambda-i}
\lambda_i(x)= \sup_{0<\lambda<1}\left\{\tau(e_{(\lambda,\infty)}(|x_i|)) > \frac{\lambda}{d_i}\right\},\, i=1, \ldots,n.
\end{align}
Since  $\tau(e_{(\lambda,\infty)}(|x_i|))\le 1$, we have $\lambda_i(x)\le d_i$ for all $i$.

For $\lambda > \lambda_{\max}(x):=\max\{\lambda_1(x),\ldots, \lambda_n(x)\}$, the set $[D(e_{(\lambda,\infty)}(|x|)) > \lambda]$ is empty,  so the maximum of the set $\{\lambda,\;\mu(\{i:d_i\tau(e_{(\lambda,\infty)}(\abs{x_i}))>\lambda\})\}$ is $\lambda$ itself.  The infimum of   $\max \{\lambda,\;\mu(\{i:d_i\tau(e_{(\lambda,\infty)}(\abs{x_i}))>\lambda\})\}$
over $\lambda > \lambda_{\max}(x) $ is then $\max\{\lambda_1(x),\ldots, \lambda_n(x)\}$.

Let $\lambda <   \lambda_{\max}(x)=\max\{\lambda_1(x), \ldots, \lambda_n(x)\}$. Let   $i$ be such that $\lambda <\lambda_i(x)=\max\{\lambda_1(x), \ldots, \lambda_n(x)\}$, in particular, $i\in [D(e_{(\lambda,\infty)}(|x|))  > \lambda]$.
Thus, 
\begin{align*}
\mu\left([D(e_{(\lambda,\infty)}(|x|))  > \lambda]\right) & \ge m_i\ge \min\{m_1, \ldots, m_n\} \\
&>\max\{d_1, \ldots, d_n\}\ge d_i\ge \lambda_i(x)=\lambda_{\max}(x) >\lambda,
\end{align*}
and therefore, the infimum of   $\max \{\lambda,\;\mu(\{i:d_i\tau(e_{(\lambda,\infty)}(\abs{x_i}))>\lambda\})\}$
over $0<\lambda <\lambda_{\max}(x) $ is strictly larger than $\lambda_{\max}(x)$.

Hence, the infimum $\max \{\lambda,\;\mu(\{i:d_i\tau(e_{(\lambda,\infty)}(\abs{x_i}))>\lambda\})\}$
over over $0<\lambda<1$ is exactly $\max\{\lambda_1(x), \ldots, \lambda_n(x)\}$.
Thus, we obtain
\begin{align}\label{dd}
\left\|\bigoplus_{i=1}^n x_i\right\|_{\mu,D} = \max\{\lambda_1(x), \ldots, \lambda_n(x)\}.
\end{align}

Assume $n\ge 2$ and  take an arbitrary $\mu'=(m'_1,\ldots, m'_n)$ also satisfying~\eqref{dm}. By \eqref{dd}, we have
\[
\|x\|_{\mu,D}=\|x\|_{\mu', D}
\]
for all $x\in LS(\cM)$. This means that  the identity mapping
\[
\mathrm{Id}:(LS(\mathcal{M}),\left\|\cdot\right\|_{\mu,D}) \to (LS(\mathcal{M}),\left\|\cdot\right\|_{\mu',D})
\]
is an isometry, which shows that  an  isometry on $LS(\cM)$ may not preserve measures.

As a concrete example, 
let us take $\mathcal{N}=\mathbb{C}$. In this case every spectral projection
$e_{(\lambda,\infty)}(|x_i|)$ is either $0$ or $1$. Consequently, the quantity
$\lambda_i(x)$ defined in \eqref{lambda-i} reduces to
\[
\lambda_i(x) = \min\{d_i, |x_i|\}, \qquad x=(x_1,\ldots,x_n)\in \mathcal{M}=\mathbb{C}^n .
\]

Therefore, the $F$-norm given in \eqref{dd} can be rewritten as
\[
\|x\|_{\mu,D} = \max \bigl\{ \min\{d_1, |x_1|\}, \ldots, \min\{d_n, |x_n|\} \bigr\},
\qquad x=(x_1,\ldots,x_n)\in \mathcal{M}=\mathbb{C}^n .
\]

Thus, in the scalar case the $F$-norm is determined by the maximum of the truncated absolute values of the coordinates. In particular, it depends only on the vector
$D=(d_1,\ldots,d_n)$ and is independent of $\mu=(m_1, \ldots, m_n)$.

\subsection{Example $2$}
Let $\mathcal{M}=B(H)\oplus B(H)$, where $H$ is a Hilbert space with the dimension at least $2$.
The center of $\cM$ is $Z(\mathcal{M})\cong L_\infty(\Omega,\Sigma,\mu)$ with $\Omega=\{1,2\}$, $\mu(\{i\})=m_i$, $i=1,2$.

Define a measure $\mu$ and a dimension function $D$ by
\[
m_1=2/3,\, m_2=1/3,
\]
\[
D(p)=(d_1{\rm Tr}(p_1),\,d_2{\rm Tr}(p_2)),\qquad p=(p_1,p_2)\in P(\mathcal{M}),
\]
where 
\begin{align}\label{dd1}
d_1=1, \, \frac12<d_2<\frac23,
\end{align}
 and ${\rm Tr}$ is the standard trace on $B(H)$ (the usual trace, which takes values in $[0,\infty]$, and is integer-valued or infinity on projections).

Let $x=x_1\oplus x_2$, where $x_1,x_2\ne 0$. 
Set
\[
\alpha_i:=\|x_i\|_{B(H)}, \, i=1,2,
\]
where $\left\|\cdot\right\|_{B(H)}$ is the operator norm on $B(H)$.

Note that ${\rm Tr}(e_{(\alpha_2-\varepsilon, \infty)}(|x_2|))\ge 1$ for all $0<\varepsilon<\alpha_2$.
Set
\[
\gamma=
\begin{cases}
1, & {\rm if   }\,\,\, {\rm Tr}(e_{(\alpha_2-\varepsilon, \infty)}(|x_2|))\ge 2,\,\, \forall \varepsilon\in (0,\alpha_2)\\
0, & {\rm otherwise   }.
\end{cases}
\]
The $F$-norm can be computed as follows   
\begin{align}\label{dm1}
\|x\|_{\mu,D}=
\begin{cases}
\min\{\alpha_1, \alpha_2, 1\}, & {\rm if   }\,\,\, 2/3\le \alpha_1, \alpha_2,\, \gamma=1,\\
2/3, & {\rm if   }\,\,\, 2/3\le \alpha_1,  \alpha_2,\, \gamma=0,\\
\alpha_1, & {\rm if   }\,\,\, 1/3\le \alpha_1<2/3,\\
1/3, & {\rm if   }\,\,\, \alpha_1\le 1/3, \alpha_2\ge 1/3,\\
\max\{\alpha_1,\alpha_2\}, & {\rm if   }\,\,\, \alpha_1,\alpha_2\le 1/3.
\end{cases}
\end{align}
 
Below, we only present the   measures of $A(\lambda):=[D(e_{(\lambda, \infty)}(|x|))>\lambda]$, $0<\lambda<1,$ which is enough to obtain \eqref{dm1}. A straightforward computation shows that
\begin{enumerate}
\item $\mu(A(\lambda))=\begin{cases}
    1, & 0<\lambda<\min\{\alpha_1,\alpha_2,1\},\\
    \leq\frac{2}{3}, & \lambda\geq\min\{\alpha_1,\alpha_2,1\};
\end{cases}$
\item $\mu(A(\lambda))=\begin{cases}
    1, & 0<\lambda<d_2,\\
    \frac{2}{3}, & d_2\leq\lambda<\frac{2}{3},\\
    \leq\frac{2}{3}, & \lambda\geq\frac{2}{3};
\end{cases}$
\item $\mu(A(\lambda))=\begin{cases}
    \geq\frac{2}{3}, & 0<\lambda<\alpha_1,\\
    \leq\frac{1}{3}, & \lambda\geq\alpha_1;
\end{cases}$
\item $\mu(A(\lambda))=\begin{cases}
    \geq\frac{1}{3}, & 0<\lambda<\frac{1}{3},\\
    \leq\frac{1}{3}, & \lambda\geq\frac{1}{3};
\end{cases}$
\item $\mu(A(\lambda))=\begin{cases}
    \geq\frac{1}{3}, & 0<\lambda<\max\{\alpha_1,\alpha_2\},\\
    0, & \lambda\geq\max\{\alpha_1,\alpha_2\}.
\end{cases}$
\end{enumerate}

Now, 
taking arbitrary $D'=(d_1, d'_2)$ also satisfying~\eqref{dd1}, by \eqref{dm1}, we have
\[
\|x\|_{\mu,D}=\|x\|_{\mu, D'}
\]
for all $x\in LS(\mathcal{M})$. This means that  the identity mapping
\[
\mathrm{Id}:(LS(\mathcal{M}),\left\|\cdot\right\|_{\mu,D}) \to (LS(\mathcal{M}),\left\|\cdot\right\|_{\mu,D'})
\]
is an isometry, which shows that the  identity isometry does not preserve dimension functions.

\subsection{Example $3$}
Below,  we present an example showing that the identity mapping is an isometry but preserves neither the measure nor the dimension function.

Consider $\cM=\mathbb{C}^4$. Then $\cM\cong L_{\infty}(\Omega,\Sigma,\mu)$ with $\Omega=\{1,2,3,4\}$, $\mu(\{i\})=m_i$, $i=1,2,3,4$. Define a probability measure $\mu$ and a dimension function $D$ by
\begin{align}\label{measure2}
\frac{1}{32}\le m_1,\,m_2,\ m_1+m_2=\frac{1}{8},\,  m_3=\frac{1}{4},\ m_4=\frac{5}{8},
\end{align}
and
$$
D(p)=(d_1p_1,d_2p_2,d_3p_3,d_4p_4),\ p=(p_1,p_2,p_3,p_4),\ p_i\in\{0,1\},
$$
where 
\begin{align}\label{di2}
d_1=d_2=\frac{1}{32},\ \frac{1}{2}<d_3<\frac{5}{8},\ d_4=1.
\end{align}
Let $x=(x_1, x_2, x_3, x_4)\in \mathcal{M}$. Straightforward and tedious computations show that
\begin{align}\label{formula3}
\|x\|_{\mu,D}=
\begin{cases}
\min\{|x_4|, \frac{5}{8}\}, & {\rm if   }\,\,\, |x_4|\ge \frac{1}{4},\\
\frac{1}{4}, & {\rm if   }\,\,\, |x_4|<\frac{1}{4}\le |x_3|,\\
\max\{|x_3|,|x_4|\}, & {\rm if   }\,\,\, \frac{1}{32} \le \max\{|x_3|,|x_4|\}<\frac{1}{4},\\
\frac{1}{32} & {\rm if   }\,\, |x_3|,|x_4|<\frac{1}{32}\le \max\{|x_1|,|x_2|\},\\
\max\{|x_1|,|x_2|, |x_3|,|x_4|\}, & {\rm if   }\,\,\, |x_1|,|x_2|, |x_3|,|x_4|<\frac{1}{32}.
\end{cases}
\end{align}

Taking arbitrary $m'=\{m_1',m_2',m_3,m_4\}$ satisfying (\ref{measure2}) and $D'=\{d_1,d_2,d_3',d_4\}$ satisfying (\ref{di2}), then by formula (\ref{formula3}) we have
$$\|x\|_{\mu,D}=\|x\|_{\mu',D'}.$$
In the example above, the identity mapping
$${\rm Id}:(LS(\cM),\left\|\cdot\right\|_{\mu,D})\to (LS(\cM),\left\|\cdot\right\|_{\mu',D'})$$ defines an isometry, but does not preserve the measure or the dimension function.

\begin{remark}
We may  say that a measure captures the ``width'' of a von Neumann algebra, while a dimension function captures its ``length''. If the center is atomless, 
then the width can be partitioned into arbitrarily small pieces. For type II or type III von Neumann algebras, the length can be partitioned into arbitrarily small pieces. These properties allow us to obtain  that equivalent pairs of measures and dimension functions actually coincide.
\end{remark}
\begin{remark}
Let $\mathcal{M}$ be a von Neumann algebra with the center $Z(\mathcal{M}) \cong L_\infty(\Omega,\Sigma,\mu)$. In the present work, we focus on isometries for the case where $\mu$ is a probability measure. Note that a general finite measure case can be reduced to the probability measure case by scaling.

Assume $\mu$ is a strictly positive finite measure. For any $r>0$, define a new measure $\mu_r$ on $\Sigma$ and a new dimension function $D_r$ on $P(\mathcal{M})$ by
\[
\mu_r(A)=r\,\mu(A)\quad\text{for }A\in\Sigma,\qquad
D_r(p)=r\,D(p)\quad\text{for }p\in P(\mathcal{M}).
\]
If $\left\|\cdot\right\|_{\mu,D}$ denotes the $F$-norm defined by \eqref{LSM-norm}, then a direct computation yields
\begin{align}\label{finite-measure}
\left\| r x \right\|_{\mu_r,D_r}=r\,\left\|x\right\|_{\mu,D}\qquad\text{for all }x\in LS(\mathcal{M}).
\end{align}
Consequently, a finite-measure case (with total mass $\mu(\Omega)$) can be reduced to the probability measure case by choosing
$r=\frac{1}{\mu(\Omega)}$ and using the scaling relation
~\eqref{finite-measure}.
\end{remark}

\section*{Acknowledgements} The authors were supported by the NNSF of China (No.12031004, 12301160 and 12471134) and by Basic Research Program of Jiangsu (BK20251783). 

{\bf Conflict of interest:} On behalf of all authors, the corresponding author states
that there is no conflict of interest.

{\bf Data availability:} Not applicable.

\end{document}